\documentclass[11pt]{article}
\usepackage[a4paper,margin=1in]{geometry}
\usepackage[T1]{fontenc}
\usepackage[utf8]{inputenc}
\usepackage{lmodern}
\usepackage{amsmath,amssymb,amsthm,mathtools}
\usepackage{booktabs,longtable,array,multirow}
\usepackage{graphicx}
\usepackage{hyperref}
\usepackage{xcolor}
\usepackage{enumitem}
\usepackage{microtype}
\usepackage{titlesec}
\usepackage{caption}
\usepackage{fancyhdr}
\hypersetup{colorlinks=true,linkcolor=blue,citecolor=blue,urlcolor=blue}
\titleformat{\section}{\large\bfseries}{\thesection.}{0.6em}{}
\titleformat{\subsection}{\normalsize\bfseries}{\thesubsection.}{0.5em}{}
\newtheorem{proposition}{Proposition}[section]
\newtheorem{theorem}[proposition]{Theorem}
\newtheorem{lemma}[proposition]{Lemma}
\newtheorem{corollary}[proposition]{Corollary}
\theoremstyle{plain}
\newtheorem{conjecture}[proposition]{Conjecture}
\theoremstyle{definition}
\newtheorem{definition}[proposition]{Definition}
\theoremstyle{remark}
\newtheorem{remark}[proposition]{Remark}
\newcommand{\Ax}{\mathrm{Ax}}
\newcommand{\Spn}{\mathrm{Sp}}
\newcommand{\Argmax}{\operatorname{Argmax}}
\newcommand{\ax}{\mathrm{ax}}
\newcommand{\loc}{\mathrm{loc}}
\begin{document}

\begin{center}
{\LARGE\bfseries Axial Morphology of the Partition Graph: Self-Conjugate Axis, Spine, and Concentration\par}
\vspace{0.8em}
{\large Fedor B. Lyudogovskiy\par}
\vspace{0.5em}
\end{center}

\begin{abstract}
We study the partition graph $G_n$, whose vertices are the partitions of $n$ and whose edges correspond to elementary transfers of one unit between parts. This paper studies the axial morphology of $G_n$, focusing on the self-conjugate partitions and their first off-axis thickening. We define the self-conjugate axis
\[
\Ax_n=\{\lambda\vdash n:\lambda=\lambda'\},
\]
its distance neighborhoods $C_n^{(r)}$, the axial interaction graph $\mathcal A_n$, and the thin spine $\Spn_n$, obtained by adjoining to the axis all mediators between interacting axial vertices. We prove that the axis is the fixed-point set of conjugation, that distinct self-conjugate vertices are never adjacent in $G_n$, and that the thin spine is a canonical conjugation-invariant induced subgraph satisfying
\[
\Ax_n\subseteq \Spn_n\subseteq C_n^{(1)}.
\]
We also introduce axial and spinal concentration radii for local vertex invariants and show that these differ by at most one. Computational data for $1\le n\le 30$ indicate that maximizers of the principal local invariants considered here remain confined to bounded neighborhoods of the axis and the spine. This provides a framework for studying concentration phenomena near the symmetric core of the partition graph.
\end{abstract}

\noindent\textbf{Keywords.} partition graph; integer partitions; self-conjugate partitions; conjugation; graph morphology; local complexity; clique structure

\medskip
\noindent\textbf{MSC 2020.} 05A17, 05C75, 05C69, 05C12

\section{Introduction}

Let $G_n$ denote the graph whose vertices are the partitions of $n$. Two vertices are adjacent if one partition is obtained from the other by an elementary transfer of one unit between parts, followed by reordering. This graph is a natural combinatorial object associated with partitions of $n$. It also carries a nontrivial internal geometry: it has a canonical involutive symmetry given by conjugation of partitions, a rich local clique structure, and a global organization that changes with $n$. For general background on partitions, see Andrews~\cite{Andrews1998Theory} and Andrews--Eriksson~\cite{AndrewsEriksson2004Integer}.

This paper focuses on the axial morphology of $G_n$. The starting point is the set of self-conjugate partitions,
\[
\Ax_n:=\{\lambda\vdash n:\lambda=\lambda'\},
\]
viewed as a distinguished symmetric locus inside $G_n$. Since conjugation is an involutive automorphism of $G_n$, the set $\Ax_n$ is canonical. The central question of the paper is whether this symmetric locus, together with a small graph-theoretic thickening, captures a meaningful part of the structure of $G_n$.

More precisely, we study three related objects: the self-conjugate axis $\Ax_n$, its distance neighborhoods
\[
C_n^{(r)}=\{\lambda:d_{G_n}(\lambda,\Ax_n)\le r\},
\]
and a canonical off-axis enlargement that we call the thin spine. The thin spine is defined by adjoining to the axis those non-self-conjugate vertices that mediate between axial vertices through common-neighbor relations. The basic structural problem is whether these objects form a meaningful structural core of the graph or whether they are merely distinguished subsets. A second problem is quantitative: where do vertices of extremal local complexity lie relative to the axis and the spine? The broader combinatorics of partitions, self-conjugate partitions, and related order structures may be compared with the classical references~\cite{Andrews1998Theory,AndrewsEriksson2004Integer,Brylawski1973Lattice,GreeneKleitman1986LongestChains,LatapyPhan2009Lattice}.

This paper is part of a broader study of the partition graph, but it is intended to be mathematically self-contained at the level relevant here. In companion work, several other aspects of $G_n$ have been investigated, including the clique complex $K_n=\mathrm{Cl}(G_n)$~\cite{Lyudogovskiy2026Clique} and local invariants such as degree and clique-based neighborhood structure~\cite{Lyudogovskiy2026Local}. The present paper uses some of that language, in particular the local invariants $\deg$, $\omega_{\mathrm{loc}}$, and $\dim_{\mathrm{loc}}$, but does not depend on the full results of those papers except where explicitly indicated. 

The first contribution of the paper is structural. We show that the self-conjugate axis is canonical but too thin to connect axial vertices by itself: distinct self-conjugate vertices are never adjacent in $G_n$. This leads naturally to the thin spine, defined as the union of the axis with all first off-axis mediators between interacting axial vertices. We prove that the thin spine is again canonical and conjugation-invariant, and that it lies inside the first axial neighborhood $C_n^{(1)}$.

The second contribution is quantitative. We introduce axial and spinal concentration radii for local vertex invariants, measuring how far one must thicken the axis or the spine in order to capture all maximizers of a given invariant. For conjugation-invariant invariants, these radii are related by a simple one-step comparison, and the maximizer sets inherit the conjugation symmetry.

The third contribution is computational. For $1\le n\le 30$, we compute the basic axial parameters of $G_n$, the sizes of the axis and the spine, and the location of the maximizers of the principal local invariants considered here. In that range, the data indicate that extremal local complexity is concentrated near the axis and the spine. We state these findings in a conservative form: they provide computational evidence for bounded concentration phenomena, but do not by themselves establish asymptotic theorems. For comparison with other graph-theoretic or minimal-change approaches to partitions, see in particular~\cite{Bal2022Lognormal,Mutze2023UpdatedSurvey,RasmussenSavageWest1995GrayCodeFamilies,Savage1989GrayCodeSequences,Savage1997Survey}.

The paper is organized as follows. Section~2 fixes notation and introduces the axis, central regions, the axial interaction graph, the thin spine, and the corresponding concentration radii. Section~3 proves the first structural results, including the non-adjacency of distinct self-conjugate vertices and the basic properties of the thin spine. Section~4 develops the axial and spinal filtrations and the quantitative framework used later. Section~5 collects the main strict, computational, and conjectural statements. Section~6 presents the computational data, including basic axial counts, extremal-location tables, representative examples, and the treatment of axisless cases. Section~7 summarizes the main conclusions of the paper, and Section~8 records open problems and possible continuations.

\section{Preliminaries and Axial Definitions}

\subsection{The partition graph and conjugation}

Let $n\ge 1$, and let $V(G_n)$ be the set of all partitions of $n$. We write partitions in nonincreasing form
\[
\lambda=(\lambda_1,\lambda_2,\dots,\lambda_\ell),\qquad \lambda_1\ge \lambda_2\ge \cdots \ge \lambda_\ell\ge 1,
\]
with
\[
\sum_{i=1}^{\ell}\lambda_i=n.
\]

Two vertices $\lambda,\mu\in V(G_n)$ are said to be adjacent if $\mu$ can be obtained from $\lambda$ by an elementary transfer of one unit between two distinct parts, followed by reordering. Equivalently, one decreases one part of $\lambda$ by $1$, increases a different part by $1$ (allowing the receiving part to be a newly adjoined zero part), deletes a zero part if necessary, and then reorders the resulting list into nonincreasing form. This defines the partition graph $G_n$.

We write $d_{G_n}(\lambda,\mu)$ for the graph distance in $G_n$.

Let $\lambda'$ denote the conjugate partition of $\lambda$. When convenient, we identify a partition with its Ferrers diagram in English notation. A \emph{removable corner} of a Ferrers diagram is a cell whose removal still leaves a Ferrers diagram. An \emph{addable corner} is a cell not in the diagram whose addition yields a Ferrers diagram. A removable or addable corner is \emph{diagonal} if it lies on the main diagonal. With the convention that $\lambda_k=0$ for $k>\ell$ and $\lambda_0=+\infty$, a diagonal cell $(i,i)$ is a removable diagonal corner if and only if $\lambda_i=i$ and $\lambda_{i+1}<i$, and it is an addable diagonal corner if and only if $\lambda_i=i-1$ and $\lambda_{i-1}\ge i$.

\begin{proposition}
The map
\[
\lambda\longmapsto \lambda'
\]
is an involutive graph automorphism of $G_n$.
\end{proposition}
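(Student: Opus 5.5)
The plan is to recast adjacency in $G_n$ as a relation between Ferrers diagrams that is visibly symmetric under transposition. Since $\lambda\mapsto\lambda'$ is a bijection of $V(G_n)$ with $(\lambda')'=\lambda$, it is then enough to show that it preserves adjacency. Transposing a Ferrers diagram preserves containment of diagrams and the number of cells, so a containment-based description of adjacency will be preserved automatically.

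\textbf{Step 1 (diagram characterization).} I will prove that $\lambda,\mu\vdash n$ are adjacent in $G_n$ if and only if $\lambda\neq\mu$ and there exists $\nu\vdash n-1$ whose diagram is contained in both the diagram of $\lambda$ and that of $\mu$. Equivalently, $\mu$ arises from $\lambda$ by deleting one removable corner and then adding one addable corner of the resulting diagram. Here I read adjacency as requiring $\lambda\ne\mu$. A transfer between two parts whose values differ by one merely permutes them and would produce a loop, which the graph does not have.

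\emph{Forward direction.} Suppose $\mu$ comes from $\lambda$ by decreasing a part of value $a$ and increasing a different part of value $b$, where $b=0$ is allowed. Because the result is reordered anyway, I may choose which copy of each value is modified.
\begin{itemize}
\item Decreasing the \emph{last} row of length $a$ removes a removable corner and leaves a partition $\nu\vdash n-1$ with $\nu\subseteq\lambda$.
\item Increasing the \emph{first} row of $\nu$ of length $b$ adds an addable corner of $\nu$. If that row is not the row just shortened, the result is exactly $\mu$, with $\nu\subseteq\mu$.
\item If it is the same row, then $b=a-1$ and the two-part multiset $\{a,a-1\}$ is unchanged, so $\mu=\lambda$, which is excluded.
\end{itemize}

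\emph{Converse.} Given $\nu\subseteq\lambda$ and $\nu\subseteq\mu$ with $|\nu|=n-1$, write $\lambda=\nu+\{\text{cell in row } i\}$ and $\mu=\nu+\{\text{cell in row } j\}$. Since $\lambda\neq\mu$, we have $i\neq j$. Then decreasing row $i$ of $\lambda$ and increasing row $j$ is an elementary transfer between distinct parts; row $j$ may be a new zero part. This transfer produces $\mu$, so $\lambda$ and $\mu$ are adjacent.

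\textbf{Step 2 (conjugation).} Conjugation maps partitions of $n-1$ to partitions of $n-1$ and satisfies $\nu\subseteq\lambda\iff\nu'\subseteq\lambda'$. It is also injective, so $\lambda\neq\mu\iff\lambda'\neq\mu'$. By Step 1, $\lambda\sim\mu$ implies $\lambda'\sim\mu'$. Applying the same argument to $\lambda',\mu'$ and using $(\lambda')'=\lambda$ gives the converse. Hence conjugation is an involutive graph automorphism.

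The only real obstacle is Step 1. There one has to handle the bookkeeping of reordering carefully, namely choosing the last copy of $a$ and the first copy of $b$, and exclude the degenerate case $b=a-1$ on the same row. Everything after that is formal.
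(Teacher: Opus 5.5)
Your proof is correct and follows the same idea as the paper: an elementary transfer removes one corner cell and then adds another, and transposition turns this operation on the rows of $\lambda$ into the same operation on the rows of $\lambda'$. Your characterization of adjacency by a common subdiagram $\nu\vdash n-1$ with $\lambda\neq\mu$ is a careful version of the paper's one-line Ferrers-diagram argument, making explicit the choice of copies under reordering and the exclusion of the degenerate case $b=a-1$, both of which the paper leaves implicit.
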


\begin{proof}
An elementary transfer in a Ferrers diagram removes one boundary cell from the end of one row and appends one boundary cell to the end of another row. Transposition exchanges rows and columns, so the same operation becomes an elementary transfer between two columns, which corresponds to transferring one unit between parts of the conjugate partition. Hence adjacency is preserved by conjugation. Since $(\lambda')'=\lambda$, the map is an involutive graph automorphism. \qedhere
\end{proof}

\subsection{The self-conjugate axis}

A partition $\lambda\vdash n$ is self-conjugate if $\lambda=\lambda'$.

\begin{definition}
The self-conjugate axis of $G_n$ is the set
\[
\Ax_n:=\{\lambda\in V(G_n):\lambda=\lambda'\}.
\]

We call $n$ axial if $\Ax_n\neq\varnothing$, and axisless otherwise.
\end{definition}

\begin{remark}
The cardinality
\[
a_n:=|\Ax_n|
\]
is the classical number of self-conjugate partitions of $n$, equivalently the number of partitions of $n$ into distinct odd parts; see, for example,~\cite[Ch.~1]{Andrews1998Theory} or~\cite[Ch.~3]{AndrewsEriksson2004Integer}. In the present paper, however, $a_n$ is used as a structural parameter of the graph $G_n$.
\end{remark}

\begin{remark}
By Proposition~2.1, the axis $\Ax_n$ is the fixed-point set of the conjugation involution on $G_n$. Thus $\Ax_n$ is canonically determined by the graph structure together with conjugation.
\end{remark}

\begin{remark}
In this paper, \emph{canonical} means determined by the graph $G_n$ together with the conjugation involution, without auxiliary choices.
\end{remark}

\subsection{Axial distance and central regions}

Assume from now on that $n$ is axial.

\begin{definition}
For $\lambda\in V(G_n)$, the axial distance of $\lambda$ is
\[
\delta_{\ax}(\lambda):=d_{G_n}(\lambda,\Ax_n)
=\min_{\alpha\in\Ax_n} d_{G_n}(\lambda,\alpha).
\]
\end{definition}

\begin{definition}
For $r\ge 0$, the $r$-thickened central region is
\[
C_n^{(r)}:=\{\lambda\in V(G_n):\delta_{\ax}(\lambda)\le r\}.
\]

In particular,
\[
C_n^{(0)}=\Ax_n.
\]

We call $C_n^{(1)}$ the narrow central region.
\end{definition}

\begin{lemma}
For every $r\ge 0$, the set $C_n^{(r)}$ is conjugation-invariant.
\end{lemma}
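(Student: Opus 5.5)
The plan is to show that conjugation preserves axial distance, that is, $\delta_{\ax}(\lambda')=\delta_{\ax}(\lambda)$ for every $\lambda\in V(G_n)$. The lemma then follows at once: if $\lambda\in C_n^{(r)}$, then $\delta_{\ax}(\lambda')=\delta_{\ax}(\lambda)\le r$, so $\lambda'\in C_n^{(r)}$.

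Two facts drive the argument. First, by Proposition~2.1 the map $\lambda\mapsto\lambda'$ is a graph automorphism of $G_n$. It therefore sends paths to paths of the same length, which gives $d_{G_n}(\lambda',\mu')=d_{G_n}(\lambda,\mu)$ for all vertices $\lambda,\mu$. Second, conjugation fixes $\Ax_n$ pointwise, by definition. So for each $\alpha\in\Ax_n$ we have
\[
d_{G_n}(\lambda',\alpha)=d_{G_n}(\lambda',\alpha')=d_{G_n}(\lambda,\alpha).
\]

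Taking the minimum over $\alpha\in\Ax_n$ gives $\delta_{\ax}(\lambda')=\delta_{\ax}(\lambda)$. Because $n$ is assumed axial, $\Ax_n\neq\varnothing$ and the minimum is well defined. Disconnectedness of $G_n$ would only make some distances infinite, and that possibility is handled uniformly by the same equality.

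There is no real obstacle here. The only point needing care is that distance preservation under an automorphism is applied together with the pointwise fixing of the axis; setwise invariance of $\Ax_n$ would in fact already suffice.
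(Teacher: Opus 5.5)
Your proof is correct and follows essentially the same route as the paper. Both arguments use the fact that conjugation is a graph automorphism preserving $\Ax_n$, deduce $\delta_{\ax}(\lambda')=\delta_{\ax}(\lambda)$, and conclude that $C_n^{(r)}$ is conjugation-invariant.
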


\begin{proof}
Since conjugation is an automorphism of $G_n$ and preserves $\Ax_n$, it preserves distance to $\Ax_n$. Hence
\[
\delta_{\ax}(\lambda')=\delta_{\ax}(\lambda),
\]
and therefore $\lambda\in C_n^{(r)}$ if and only if $\lambda'\in C_n^{(r)}$. \qedhere
\end{proof}

\begin{remark}
The nested family
\[
\Ax_n=C_n^{(0)}\subseteq C_n^{(1)}\subseteq C_n^{(2)}\subseteq \cdots \subseteq V(G_n)
\]
provides a canonical axial filtration of the vertex set.
\end{remark}

\subsection{Axial interaction graph and mediators}

Since the axis will turn out to be too thin to connect axial vertices directly, we introduce an auxiliary graph on $\Ax_n$ that records the first off-axis interactions between axial vertices.

\begin{definition}
The axial interaction graph $\mathcal A_n$ is the graph with vertex set $\Ax_n$, in which two distinct vertices $\alpha,\beta\in\Ax_n$ are adjacent if and only if
\[
N_{G_n}(\alpha)\cap N_{G_n}(\beta)\neq\varnothing.
\]

Equivalently, $\alpha$ and $\beta$ are adjacent in $\mathcal A_n$ if and only if there exists a vertex $\nu\in V(G_n)$ such that
\[
\nu\sim \alpha,\qquad \nu\sim \beta
\]
in $G_n$.
\end{definition}

\begin{definition}
Let $\alpha,\beta\in\Ax_n$ be adjacent in $\mathcal A_n$. Any vertex
\[
\nu\in N_{G_n}(\alpha)\cap N_{G_n}(\beta)
\]
is called an axial mediator for the pair $(\alpha,\beta)$. We write
\[
M(\alpha,\beta):=N_{G_n}(\alpha)\cap N_{G_n}(\beta)
\]
for the set of all mediators of that pair.
\end{definition}

\begin{remark}
It will be shown in Theorem~3.2 that any common neighbor of two distinct axial vertices is necessarily non-axial. Thus, after the non-adjacency theorem is established, one may equivalently restrict the mediators in the definition above to vertices in $V(G_n)\setminus \Ax_n$.
\end{remark}

\subsection{Thin and thick spine}

\begin{definition}
The thin spine of $G_n$ is the induced subgraph on the vertex set
\[
\Spn_n:=\Ax_n\cup \bigcup_{\{\alpha,\beta\}\in E(\mathcal A_n)} M(\alpha,\beta).
\]
By slight abuse of notation, we use $\Spn_n$ both for this vertex set and for the induced subgraph on it.
\end{definition}

\begin{definition}
For $r\ge 0$, the $r$-thick spine is the $r$-neighborhood of $\Spn_n$:
\[
\Spn_n^{(r)}:=\{\lambda\in V(G_n): d_{G_n}(\lambda,\Spn_n)\le r\}.
\]

In particular,
\[
\Spn_n^{(0)}=\Spn_n.
\]
\end{definition}

\subsection{Extremal local complexity and concentration radii}

Let $I:V(G_n)\to\mathbb R$ be a vertex invariant. The main examples in this paper are the local invariants
\[
\deg,\qquad \omega_{\loc},\qquad \dim_{\loc},
\]
where
\[
\omega_{\loc}(v):=\max\{\,|Q|: Q \text{ is a clique in } G_n \text{ and } v\in Q\,\},
\qquad
\dim_{\loc}(v):=\omega_{\loc}(v)-1.
\]

We write
\[
\Argmax(I):=\{\lambda\in V(G_n): I(\lambda)=\max_{\mu\in V(G_n)} I(\mu)\}.
\]

\begin{definition}
The axial concentration radius of $I$ is
\[
\rho_I^{\ax}(n):=\min\{r\ge 0:\Argmax(I)\subseteq C_n^{(r)}\}.
\]
\end{definition}

\begin{definition}
The spinal concentration radius of $I$ is
\[
\rho_I^{\mathrm{sp}}(n):=\min\{r\ge 0:\Argmax(I)\subseteq \Spn_n^{(r)}\}.
\]
\end{definition}

\begin{remark}
These quantities provide a compact language for formulating localization statements for extremal local invariants.
\end{remark}

\section{The Self-Conjugate Axis and the Thin Spine}

In this section we prove the first strict structural facts about the axial part of the partition graph. The main point is that the self-conjugate axis is canonical but too thin to serve as a core by itself. This leads naturally to the thin spine as an off-axis thickening of the axis.

\subsection{The axis as a fixed-point set}

Recall that
\[
\Ax_n=\{\lambda\in V(G_n):\lambda=\lambda'\}
\]
is the fixed-point set of the conjugation involution on $G_n$.

The first result shows that the axis is canonical but sparse.

\begin{lemma}
A partition cannot have both a removable diagonal corner and an addable diagonal corner.
\end{lemma}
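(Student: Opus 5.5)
The plan is to argue by contradiction directly from the coordinate criteria stated in the preliminaries. Recall that, with $\lambda_0=+\infty$ and $\lambda_k=0$ for $k>\ell$, the cell $(i,i)$ is a removable diagonal corner if and only if $\lambda_i=i$ and $\lambda_{i+1}<i$. The cell $(j,j)$ is an addable diagonal corner if and only if $\lambda_j=j-1$ and $\lambda_{j-1}\ge j$. Suppose $\lambda$ has both, at positions $i$ and $j$ (with $i,j\ge 1$). Clearly $i\ne j$, since $\lambda_i=i$ and $\lambda_i=i-1$ cannot both hold. We split into the two cases $j>i$ and $j<i$.

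\emph{Case $j>i$.} Then $j\ge i+1$. By monotonicity, $\lambda_j\le\lambda_{i+1}<i$, so $j-1=\lambda_j<i$ and hence $j\le i$, a contradiction.

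\emph{Case $j<i$.} Then $j\le i-1$, so $\lambda_j\ge\lambda_i=i$ by monotonicity. This gives $j-1\ge i$, i.e.\ $j\ge i+1$, again a contradiction.

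Boundary conventions need a brief check. For $j=1$ the condition $\lambda_0\ge 1$ is vacuous, and the argument only uses $\lambda_j=j-1$, so nothing changes. For $i$ with $i+1>\ell$ we have $\lambda_{i+1}=0$, which is consistent with the monotone extension by zeros. The only real care needed is therefore to apply the extended sequence $(\lambda_k)_{k\ge 1}$, padded with zeros, uniformly. I do not expect any genuine obstacle.

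An alternative, more conceptual framing uses the Durfee square. The Durfee side $d=\max\{k:\lambda_k\ge k\}$ is unique. A removable diagonal corner at $(i,i)$ forces $\lambda_i=i$ and $\lambda_{i+1}<i+1$, so $d=i$. An addable diagonal corner at $(j,j)$ forces $\lambda_{j-1}\ge j-1$ and $\lambda_j<j$, so $d=j-1$. Removing the corner leaves a Durfee square of side $d-1$, whereas adding it yields side $d+1$.

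Under this framing, the removable diagonal corner is the cell $(d,d)$ and the addable one is $(d+1,d+1)$. The first requires $\lambda_{d+1}<d$, while the second requires $\lambda_{d+1}=d$, which is incompatible. I would present the inequality argument as the proof and perhaps mention the Durfee square interpretation as a remark.
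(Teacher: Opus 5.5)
Your proof is correct and follows the paper's argument: a contradiction obtained from the coordinate criteria for diagonal corners plus monotonicity, split by the relative position of the two diagonal indices. The only difference is that your case $j>i$ absorbs the paper's separate subcase $e=d+1$ through $\lambda_j\le\lambda_{i+1}<i$, so you never use the second addability condition $\lambda_{j-1}\ge j$; your Durfee-square remark is a valid alternative framing.
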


\begin{proof}
Let $\lambda$ be a partition. Suppose that $(d,d)$ is a removable diagonal corner and $(e,e)$ is an addable diagonal corner.

Since $(d,d)$ is removable, one has
\[
\lambda_d=d
\qquad\text{and}\qquad
\lambda_{d+1}<d.
\]
Since $(e,e)$ is addable, one has
\[
\lambda_e=e-1
\qquad\text{and}\qquad
\lambda_{e-1}\ge e.
\]

If $e\le d$, then monotonicity gives
\[
\lambda_e\ge \lambda_d=d\ge e,
\]
contradicting $\lambda_e=e-1$.

If $e\ge d+2$, then again by monotonicity,
\[
\lambda_e\le \lambda_{d+1}<d<e-1,
\]
contradicting $\lambda_e=e-1$.

Thus the only remaining possibility is $e=d+1$. In that case addability at $(d+1,d+1)$ requires
\[
\lambda_d=\lambda_{e-1}\ge e=d+1,
\]
whereas removability at $(d,d)$ gives $\lambda_d=d$, contradiction.

Therefore no partition can have both a removable diagonal corner and an addable diagonal corner. \qedhere
\end{proof}

\begin{theorem}
Let $\alpha,\beta\in \Ax_n$ be distinct self-conjugate vertices. Then $\alpha$ and $\beta$ are not adjacent in $G_n$.
\end{theorem}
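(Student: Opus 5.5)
The plan is to recast adjacency in $G_n$ as a statement about Ferrers diagrams: two adjacent vertices differ by moving exactly one cell. Symmetry of both endpoints then forces both cells onto the main diagonal, which Lemma~3.1 rules out.

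\emph{Step 1 (adjacency as a one-cell move).} Suppose $\mu$ is obtained from $\lambda$ by decreasing a part equal to $a$ and increasing a different part equal to $b\ge 0$. Only the multiset of parts matters, so I may take the decreased part to be the \emph{last} row of length $a$ and the increased part to be the \emph{first} row of length $b$ (if $b=0$, the row just below the diagram). With this choice no reordering is needed. The diagram of $\mu$ is then the diagram of $\lambda$ with one cell $c=(i,a)$ removed and one cell $d=(j,b+1)$ added. Here $c$ is a removable corner of $\lambda$, and $\gamma:=\lambda\setminus\{c\}$ is a Ferrers diagram to which $d$ is addable. The case $a=b+1$ needs a quick check: the two rows then have equal length, and the choice above still gives a valid diagram. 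Since the move takes a unit from one part to a \emph{distinct} part, we have $c\neq d$. Hence the symmetric difference of the diagrams of $\lambda$ and $\mu$ is exactly $\{c,d\}$, with $c\in\lambda\setminus\mu$ and $d\in\mu\setminus\lambda$.

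\emph{Step 2 (symmetry forces diagonal cells).} Suppose now that $\alpha,\beta\in\Ax_n$ are adjacent. Both diagrams are invariant under transposition, so $\alpha\setminus\beta=\{c\}$ and $\beta\setminus\alpha=\{d\}$ are each transposition-invariant singletons. Therefore $c$ and $d$ lie on the main diagonal, say $c=(p,p)$ and $d=(q,q)$, with $p\neq q$.

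\emph{Step 3 (contradiction via Lemma~3.1).} By Step 1, $c$ is a removable diagonal corner of $\alpha$. I will show that $d$ is also an addable diagonal corner of $\alpha$ itself, not merely of $\gamma=\alpha\setminus\{c\}$; this contradicts Lemma~3.1. Since $\gamma$ and $\alpha$ differ only in row $p$, the addability conditions $\gamma_q=q-1$ and $\gamma_{q-1}\ge q$ transfer to $\alpha$ whenever $q\notin\{p,p+1\}$. The case $q=p$ is excluded because $c\neq d$. For $q=p+1$, addability in $\gamma$ would require $\gamma_p\ge p+1$, but $\gamma_p=p-1$; so this case cannot occur.

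I expect the main obstacle to be Step 1, the rigorous identification of an elementary transfer followed by reordering with a removable-corner/addable-corner move. The worry there is the equal-rows and zero-part subtleties. An alternative that bypasses Step 3 is to observe that $c$ and $d$ are two distinct addable diagonal corners of $\gamma$. If $p<q$, this forces $p-1=\gamma_p\ge\gamma_{q-1}\ge q$, which is absurd.
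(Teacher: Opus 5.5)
Your proposal is correct and follows the paper's argument: write the edge as removing a corner $c$ and adding a cell $d$, use transposition-invariance of the singletons $\alpha\setminus\beta$ and $\beta\setminus\alpha$ to place both cells on the diagonal, and contradict Lemma~3.1; your Step~3 also usefully verifies that $d$ is addable to $\alpha$ itself, which the paper asserts without proof. One fix in Step~1: for $a=b+1$ your row choice does \emph{not} give a valid diagram (e.g.\ $(2,1)$ becomes rows $(1,2)$), but such a transfer returns $\lambda$ itself, so it is not an edge and should simply be excluded (it cannot arise here since $\alpha\neq\beta$).
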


\begin{proof}
Assume for contradiction that $\alpha\sim\beta$ in $G_n$.

By definition of the partition graph, $\beta$ is obtained from $\alpha$ by one elementary transfer. Hence the Ferrers diagrams of $\alpha$ and $\beta$ differ by exactly one removed cell and one added cell:
\[
\alpha\setminus\beta=\{c\},
\qquad
\beta\setminus\alpha=\{a\},
\]
where $c$ is a removable corner of $\alpha$ and $a$ is an addable corner of $\alpha$.

Since both $\alpha$ and $\beta$ are self-conjugate, we have
\[
\alpha'=\alpha,\qquad \beta'=\beta.
\]
Therefore
\[
(\alpha\setminus\beta)'=\alpha'\setminus\beta'=\alpha\setminus\beta,
\qquad
(\beta\setminus\alpha)'=\beta'\setminus\alpha'=\beta\setminus\alpha.
\]
Each of these sets is a singleton, so both $c$ and $a$ are fixed by transposition. Hence both lie on the main diagonal.

Thus $\alpha$ has both a removable diagonal corner and an addable diagonal corner, contradicting the previous lemma. Therefore $\alpha$ and $\beta$ are not adjacent in $G_n$. \qedhere
\end{proof}

\begin{corollary}
The induced subgraph $G_n[\Ax_n]$ is edgeless.
\end{corollary}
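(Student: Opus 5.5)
This corollary follows directly from Theorem~3.2, so the plan is simply to unwind the definition of an induced subgraph. The induced subgraph $G_n[\Ax_n]$ has vertex set $\Ax_n$. Its edges are exactly the edges of $G_n$ whose two endpoints both lie in $\Ax_n$. To show it is edgeless, I will take an arbitrary pair of distinct vertices $\alpha,\beta\in\Ax_n$ and check that $\{\alpha,\beta\}\notin E(G_n)$.

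First, I note that $G_n$ has no loops. An elementary transfer moves one cell from one row to a different row, so $\mu\ne\lambda$ whenever $\mu$ is obtained from $\lambda$ by a transfer; equivalently, the symmetric difference of the two Ferrers diagrams is nonempty. Hence any edge of $G_n[\Ax_n]$ must join two \emph{distinct} self-conjugate partitions. Theorem~3.2 says that such $\alpha$ and $\beta$ are never adjacent in $G_n$. Therefore no edge of $G_n$ has both endpoints in $\Ax_n$, and $E(G_n[\Ax_n])=\varnothing$.

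There is no real obstacle here. The only point needing care is the loop question: whether a transfer could return the same partition after reordering. Viewed in Ferrers diagrams, a transfer removes one removable corner and adds a different addable cell, so the resulting diagram differs from the original and the partition changes. This is already implicit in the proof of Theorem~3.2, where $\alpha\setminus\beta=\{c\}$.

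It is also worth recording, since the paper uses it next, that for $a_n\ge 2$ this makes $\Ax_n$ an independent set of size $a_n$. In particular, $G_n[\Ax_n]$ is disconnected whenever $a_n\ge 2$, which motivates introducing $\mathcal A_n$ and the thin spine.
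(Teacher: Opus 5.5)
Your main deduction is the paper's own: the corollary is immediate from Theorem~3.2, because an edge of $G_n[\Ax_n]$ would have to join two distinct self-conjugate vertices.

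Your paragraph on loops, however, asserts something false. A transfer from a part of size $k+1$ to a part of size $k$ returns the same partition after reordering.
\begin{itemize}
\item For $(2,1)\vdash 3$, moving a unit from the part $2$ to the part $1$ gives $(1,2)$, which reorders to $(2,1)$.
\item For $(1)\vdash 1$, moving the unit to a newly adjoined zero part gives $(1)$ again.
\end{itemize}
In diagram language, removing the removable corner $(1,2)$ of $(2,1)$ and adding its addable cell $(2,2)$ yields rows of lengths $1,2$. Reordering undoes this. So your claim that ``the resulting diagram differs from the original'' fails exactly in this case.

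Note that $(2,1)$ is itself self-conjugate. If loops were admitted, the corollary would actually be false for $n=3$, and likewise for $n=1$. The absence of loops therefore cannot be derived from the transfer rule. It is the standing convention that $G_n$ is a simple graph, with adjacency only between distinct vertices. The paper's degree data confirm this: $\deg((1))=0$ for $n=1$, and $\deg((2,1))=2$ for $n=3$.

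Your appeal to $\alpha\setminus\beta=\{c\}$ in the proof of Theorem~3.2 does not rescue the loop argument, since that proof assumes $\alpha\neq\beta$ from the start. Replace the loop paragraph by an appeal to the simple-graph convention. The corollary then follows from Theorem~3.2 in one line, as in the paper.
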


\begin{proof}
Immediate from Theorem~3.2. \qedhere
\end{proof}

\begin{remark}
The axis is canonical, but it is not itself a graph-theoretic chain or path. Any graph-theoretic structure intended to connect axial vertices must therefore pass through non-axial vertices.
\end{remark}

\subsection{First off-axis bridges between axial vertices}

Since distinct axial vertices cannot be adjacent, the first nontrivial interactions between them occur through common neighbors.

\begin{lemma}
Let $\alpha,\beta\in\Ax_n$ be distinct, and let $\nu\in M(\alpha,\beta)$. Then:
\begin{enumerate}[label=(\arabic*)]
\item $\nu\notin\Ax_n$;
\item $\delta_{\ax}(\nu)=1$;
\item $\nu\in C_n^{(1)}\setminus C_n^{(0)}$.
\end{enumerate}
\end{lemma}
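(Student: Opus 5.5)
The plan is to derive all three items directly from Theorem~3.2 and the definitions of $\delta_{\ax}$ and $C_n^{(r)}$; no new combinatorics of Ferrers diagrams should be needed.

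For (1), suppose for contradiction that $\nu\in\Ax_n$. Since $\nu\in M(\alpha,\beta)=N_{G_n}(\alpha)\cap N_{G_n}(\beta)$, we have $\nu\sim\alpha$ and $\nu\sim\beta$ in $G_n$. Neighborhoods in the simple graph $G_n$ exclude the vertex itself, so $\nu\neq\alpha$. Hence $\nu$ and $\alpha$ are distinct self-conjugate vertices that are adjacent in $G_n$, which contradicts Theorem~3.2. Using $\beta$ instead of $\alpha$ would work equally well. Therefore $\nu\notin\Ax_n$.

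For (2), the lower and upper bounds come separately. By (1), $\nu\notin\Ax_n$, so $d_{G_n}(\nu,\gamma)\ge 1$ for every $\gamma\in\Ax_n$, and thus $\delta_{\ax}(\nu)\ge 1$. On the other hand, $\alpha\in\Ax_n$ and $\nu\sim\alpha$, so $d_{G_n}(\nu,\alpha)=1$, and the minimum defining $\delta_{\ax}(\nu)$ is at most $1$. Together these give $\delta_{\ax}(\nu)=1$.

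For (3), the definition of $C_n^{(r)}$ together with $\delta_{\ax}(\nu)=1$ gives $\nu\in C_n^{(1)}$. Since $\delta_{\ax}(\nu)=1>0$, we also get $\nu\notin C_n^{(0)}=\Ax_n$, which is consistent with (1).

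The only point needing care is in (1): we must justify that $\nu\neq\alpha$, so that Theorem~3.2, which concerns \emph{distinct} axial vertices, actually applies. This follows because $G_n$ has no loops, since an elementary transfer always changes the multiset of parts. Note that the distinctness of $\alpha$ and $\beta$ is not used beyond guaranteeing that the lemma is placed in the context of $\mathcal A_n$.
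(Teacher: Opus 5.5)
Your argument is correct and follows the same route as the paper. Item (1) comes from Theorem~3.2, and items (2) and (3) then follow directly from the definitions of $\delta_{\ax}$ and $C_n^{(r)}$.

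One justification needs correcting. The claim that ``an elementary transfer always changes the multiset of parts'' is false. Moving a unit from a part equal to $b+1$ to a part equal to $b$ returns the same partition: for instance $(2,1)\mapsto(1,2)$, which reorders to $(2,1)$. The same happens when the unit of a part equal to $1$ is moved into a newly adjoined zero part.

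So looplessness of $G_n$ does not follow from the transfer rule. It is a convention: adjacency is between distinct vertices, and $N_{G_n}(\alpha)$ is the open neighborhood. Under that convention your step $\nu\neq\alpha$ is fine.

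The paper's proof does not rely on this convention. It treats the case $\nu=\alpha$ separately: then $\nu\sim\beta$ would give $\alpha\sim\beta$, contradicting Theorem~3.2. This is exactly where the distinctness of $\alpha$ and $\beta$ is used. Your closing remark that distinctness plays no role is true only because you lean on the simple-graph convention instead.
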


\begin{proof}
Since $\nu\sim\alpha$ and $\alpha\in\Ax_n$, we have $\delta_{\ax}(\nu)\le 1$. If $\nu\in\Ax_n$, then $\nu\sim\alpha$ would contradict Theorem~3.2 unless $\nu=\alpha$. But then $\nu\sim\beta$ would imply $\alpha\sim\beta$, again contradicting Theorem~3.2. Hence $\nu\notin\Ax_n$, so $\delta_{\ax}(\nu)=1$. The last claim follows immediately. \qedhere
\end{proof}

\subsection{The thin spine}

We now package the axis together with all first off-axis bridges between interacting axial vertices.

\begin{proposition}
The thin spine $\Spn_n$ is a canonical conjugation-invariant induced subgraph of $G_n$.
\end{proposition}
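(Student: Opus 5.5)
The plan is to verify the three properties in the statement separately: being canonical, being conjugation-invariant, and being an induced subgraph. Throughout, write $\sigma(\lambda)=\lambda'$ for the conjugation automorphism of Proposition~2.1.

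\emph{Induced subgraph.} This holds by definition. Definition~2.12 takes $\Spn_n$ to be the induced subgraph on its vertex set, so nothing needs proving beyond noting that the vertex set is well defined.

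\emph{Canonicity.} I will show that every ingredient of the vertex set
\[
\Spn_n=\Ax_n\cup\bigcup_{\{\alpha,\beta\}\in E(\mathcal A_n)} M(\alpha,\beta)
\]
is determined by $G_n$ and $\sigma$ alone. First, $\Ax_n$ is the fixed-point set of $\sigma$ (Remark~2.4). Second, the edge set of $\mathcal A_n$ is defined purely through neighborhoods in $G_n$ of vertices of $\Ax_n$. Third, each $M(\alpha,\beta)=N_{G_n}(\alpha)\cap N_{G_n}(\beta)$ is again a graph-theoretic neighborhood intersection. No auxiliary choices enter, so in the sense of Remark~2.5 the set is canonical.

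\emph{Conjugation invariance.} This is the main step. Since $\sigma$ is an automorphism, $\sigma(N_{G_n}(v))=N_{G_n}(\sigma(v))$ for every vertex $v$. For $\alpha,\beta\in\Ax_n$ we have $\sigma(\alpha)=\alpha$ and $\sigma(\beta)=\beta$. Because $\sigma$ is a bijection it commutes with intersections, so
\[
\sigma(M(\alpha,\beta))=N_{G_n}(\alpha)\cap N_{G_n}(\beta)=M(\alpha,\beta).
\]
Thus each mediator set is invariant individually; I will not even need the fact that $\sigma$ permutes the edges of $\mathcal A_n$. Since $\sigma(\Ax_n)=\Ax_n$ as well, the union $\Spn_n$ is invariant. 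Finally, an automorphism that preserves a vertex set restricts to an automorphism of the induced subgraph on that set. Hence $\sigma$ restricts to an involutive automorphism of $\Spn_n$.

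I expect no real obstacle here. The only point needing care is the identity $\sigma(A\cap B)=\sigma(A)\cap\sigma(B)$, which uses the injectivity of $\sigma$, together with the fact that each axial vertex is fixed pointwise, not merely that the axis is fixed as a set. I may also record, using Lemma~3.5, that the mediators lie in $C_n^{(1)}\setminus\Ax_n$. This is not needed for the proposition, but it anticipates the inclusion $\Spn_n\subseteq C_n^{(1)}$.
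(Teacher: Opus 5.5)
Your proposal is correct and follows essentially the same route as the paper: canonicity because the definition uses only the fixed-point set of conjugation and neighborhood intersections in $G_n$, and invariance because conjugation fixes each axial vertex pointwise, so each mediator set $M(\alpha,\beta)$ is mapped onto itself. Your added remark that conjugation therefore restricts to an automorphism of the induced subgraph $\Spn_n$ is a harmless strengthening of the paper's final sentence.
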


\begin{proof}
The definition uses only the conjugation-fixed set $\Ax_n$, adjacency in $G_n$, and common-neighbor relations among axial vertices; hence $\Spn_n$ is canonical.

To prove conjugation-invariance, let $\nu\in M(\alpha,\beta)$. Since conjugation is a graph automorphism and $\alpha,\beta\in\Ax_n$, we have
\[
\nu'\sim \alpha'=\alpha,\qquad \nu'\sim \beta'=\beta.
\]
Therefore $\nu'\in M(\alpha,\beta)$. So every mediator set is conjugation-invariant, and hence so is their union with $\Ax_n$. Since $\Spn_n$ is defined as the induced subgraph on this vertex set, the whole thin spine is conjugation-invariant. \qedhere
\end{proof}

\begin{corollary}
One has
\[
\Ax_n\subseteq \Spn_n\subseteq C_n^{(1)}.
\]
\end{corollary}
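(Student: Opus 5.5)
The first inclusion $\Ax_n\subseteq\Spn_n$ holds by definition, since $\Spn_n$ is defined as the union of $\Ax_n$ with the mediator sets. For the second inclusion, I will split $\Spn_n$ into its two constituents. The first is the axis itself, $\Ax_n=C_n^{(0)}\subseteq C_n^{(1)}$, by the nested axial filtration.

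The second constituent is the union of the mediator sets $M(\alpha,\beta)$ over edges $\{\alpha,\beta\}\in E(\mathcal A_n)$. An edge of $\mathcal A_n$ joins two \emph{distinct} axial vertices by definition, so Lemma~3.5 applies directly. It gives, for every $\nu\in M(\alpha,\beta)$, that $\delta_{\ax}(\nu)=1$, so $\nu\in C_n^{(1)}$. Alternatively, and even more directly, $\nu\sim\alpha$ with $\alpha\in\Ax_n$ already forces $\delta_{\ax}(\nu)\le 1$.

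Taking the union over all edges of $\mathcal A_n$, together with the axis, yields $\Spn_n\subseteq C_n^{(1)}$. There is no real obstacle here. The only point needing care is that the definition of $\mathcal A_n$ requires $\alpha\neq\beta$, which is exactly the hypothesis of Lemma~3.5. As a refinement, one may also record that $\Spn_n\setminus\Ax_n\subseteq C_n^{(1)}\setminus C_n^{(0)}$.
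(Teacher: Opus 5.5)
Your proof is correct and matches the paper's: the first inclusion holds by definition, and the second follows from Lemma~3.5 applied to mediators of distinct axial vertices. Your side remarks are also right: adjacency to an axial vertex already gives $\delta_{\ax}(\nu)\le 1$ without invoking non-adjacency, and the axis itself lies in $C_n^{(0)}\subseteq C_n^{(1)}$.
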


\begin{proof}
The first inclusion is immediate from the definition of $\Spn_n$. The second follows from the previous lemma. \qedhere
\end{proof}

\begin{proposition}
A vertex $\nu\in V(G_n)\setminus\Ax_n$ belongs to $\Spn_n$ if and only if it is the middle vertex of a path of length $2$,
\[
\alpha-\nu-\beta,
\]
with distinct endpoints $\alpha,\beta\in\Ax_n$.
\end{proposition}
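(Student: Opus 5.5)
The plan is to unwind the definition of $\Spn_n$ and check both implications directly. For $\nu\notin\Ax_n$, membership in $\Spn_n$ can only come from the union of the mediator sets, because $\nu$ is not in the axis part of the union. So the statement reduces to the equivalence
\[
\nu\in\bigcup_{\{\alpha,\beta\}\in E(\mathcal A_n)} M(\alpha,\beta)
\iff
\exists\,\alpha\neq\beta\in\Ax_n:\ \alpha\sim\nu\sim\beta .
\]

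For the forward direction, suppose $\nu\in M(\alpha,\beta)$ for some edge $\{\alpha,\beta\}$ of $\mathcal A_n$. Edges of $\mathcal A_n$ join distinct vertices, so $\alpha\neq\beta$, and both lie in $\Ax_n$. By the definition of $M(\alpha,\beta)=N_{G_n}(\alpha)\cap N_{G_n}(\beta)$, we have $\nu\sim\alpha$ and $\nu\sim\beta$. Hence $\alpha-\nu-\beta$ is a walk of length $2$ in $G_n$.

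It is in fact a path, since its three vertices are pairwise distinct. First, $\alpha\neq\beta$ by assumption. Second, $\nu\neq\alpha$ and $\nu\neq\beta$, because $\nu\notin\Ax_n$ while $\alpha,\beta\in\Ax_n$. (Alternatively, this follows from Lemma~3.5(1), or simply from the absence of loops in $G_n$.)

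For the converse, suppose $\alpha-\nu-\beta$ is a path with distinct endpoints $\alpha,\beta\in\Ax_n$. Then $\nu\in N_{G_n}(\alpha)\cap N_{G_n}(\beta)$, so this intersection is nonempty. By the definition of $\mathcal A_n$, the pair $\{\alpha,\beta\}$ is an edge of $\mathcal A_n$. Consequently $\nu\in M(\alpha,\beta)\subseteq\Spn_n$.

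There is no real obstacle here; the argument is a direct unwinding of definitions. The only point needing care is distinctness, and two observations settle it. Distinctness of the endpoints is exactly what the definition of $\mathcal A_n$ requires for an edge. Distinctness of the middle vertex from the endpoints is automatic from $\nu\notin\Ax_n$. Theorem~3.2 does not enter except to note, as in Lemma~3.5, that no axial vertex can occupy the middle position, which is why the statement restricts to $\nu\notin\Ax_n$ without losing generality.
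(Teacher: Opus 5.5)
Your proof is correct and follows the paper's own argument: both simply unwind the definitions of $\Spn_n$, $\mathcal A_n$, and $M(\alpha,\beta)$, so that a non-axial vertex lies in the spine exactly when it is a common neighbor of two distinct axial vertices. Your explicit check that $\alpha$, $\nu$, $\beta$ are pairwise distinct (via $\nu\notin\Ax_n$) spells out a detail the paper leaves implicit.
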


\begin{proof}
By definition, $\nu\in\Spn_n\setminus\Ax_n$ if and only if there exist distinct axial vertices $\alpha,\beta\in\Ax_n$ such that
\[
\nu\in M(\alpha,\beta)=N_{G_n}(\alpha)\cap N_{G_n}(\beta).
\]
This is equivalent to saying that $\alpha-\nu-\beta$ is a path of length $2$ in $G_n$. \qedhere
\end{proof}

\begin{remark}
The thin spine may therefore be viewed as the axis together with all first off-axis bridges between interacting axial vertices.
\end{remark}

\section{Central Regions, Thick Spines, and Concentration Radii}

The results of the previous section show that the self-conjugate axis is too thin to serve as a graph-theoretic core by itself, while the thin spine provides an off-axis thickening. In this section we organize these objects into nested distance neighborhoods and introduce quantitative parameters for local concentration.

\subsection{Axial and spinal filtrations}

Recall that the axial distance of a vertex $\lambda\in V(G_n)$ is
\[
\delta_{\ax}(\lambda)=d_{G_n}(\lambda,\Ax_n),
\]
and that
\[
C_n^{(r)}=\{\lambda\in V(G_n):\delta_{\ax}(\lambda)\le r\}.
\]

Similarly, for $r\ge 0$,
\[
\Spn_n^{(r)}=\{\lambda\in V(G_n):d_{G_n}(\lambda,\Spn_n)\le r\}.
\]

Thus we have two natural nested filtrations:
\[
\Ax_n=C_n^{(0)}\subseteq C_n^{(1)}\subseteq C_n^{(2)}\subseteq\cdots\subseteq V(G_n),
\]
and
\[
\Spn_n=\Spn_n^{(0)}\subseteq \Spn_n^{(1)}\subseteq \Spn_n^{(2)}\subseteq\cdots\subseteq V(G_n).
\]

\begin{proposition}
For every $r\ge 0$, the sets $C_n^{(r)}$ and $\Spn_n^{(r)}$ are conjugation-invariant.
\end{proposition}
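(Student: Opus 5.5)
The plan is to reduce both claims to one general fact: if $\phi$ is a graph automorphism of $G_n$ and $S\subseteq V(G_n)$ satisfies $\phi(S)=S$, then every distance neighborhood of $S$ is also $\phi$-invariant. For $r\ge 0$ write $N_r(S):=\{\lambda: d_{G_n}(\lambda,S)\le r\}$. Since $\phi$ is a bijection on vertices that preserves adjacency in both directions, it maps paths to paths of the same length. Hence $d_{G_n}(\phi(\lambda),\phi(\mu))=d_{G_n}(\lambda,\mu)$ for all $\lambda,\mu$, with the convention that the distance is $+\infty$ when no path exists, so the argument does not need $G_n$ to be connected. Taking the minimum over $\mu\in S$ and using $\phi(S)=S$ gives
\[
d_{G_n}(\phi(\lambda),S)=\min_{\mu\in S}d_{G_n}(\phi(\lambda),\phi(\mu))=\min_{\mu\in S} d_{G_n}(\lambda,\mu)=d_{G_n}(\lambda,S),
\]
and therefore $\phi(N_r(S))=N_r(S)$.

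Next I would apply this with $\phi$ the conjugation map, which is an involutive graph automorphism by Proposition~2.1. For the axial filtration take $S=\Ax_n$, which is conjugation-invariant because it is the fixed-point set of conjugation; then $N_r(\Ax_n)=C_n^{(r)}$. This is exactly Lemma~2.6, so that part may simply be cited. For the spinal filtration take $S=\Spn_n$, whose vertex set is conjugation-invariant by Proposition~3.7; then $N_r(\Spn_n)=\Spn_n^{(r)}$ by definition, and the general fact gives the result.

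No step is a real obstacle here. The only point that needs care is the use of invariance of $S$ in the form $\phi(S)=S$, rather than merely $\phi(S)\subseteq S$, when re-indexing the minimum. This equality follows because conjugation is an involution: applying it to $\phi(S)\subseteq S$ gives $S\subseteq\phi(S)$.
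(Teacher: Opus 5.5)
Your proof is correct and takes the same route as the paper. The paper also cites the earlier lemma for $C_n^{(r)}$, then combines the conjugation-invariance of $\Spn_n$ (its Proposition~3.6, not 3.7) with the fact that conjugation preserves graph distance; you simply spell out the distance-preservation and the $\phi(S)=S$ re-indexing step that the paper leaves implicit.
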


\begin{proof}
The conjugation-invariance of $C_n^{(r)}$ is Lemma~2.7. The set $\Spn_n$ is conjugation-invariant by Proposition~3.6, and conjugation preserves distance in $G_n$. Hence $\Spn_n^{(r)}$ is also conjugation-invariant. \qedhere
\end{proof}

\begin{remark}
The axial filtration measures distance from the fixed-point set of conjugation, while the spinal filtration measures distance from its first graph-theoretic thickening.
\end{remark}

\subsection{First comparison between the axis and the spine}

Since
\[
\Ax_n\subseteq \Spn_n\subseteq C_n^{(1)},
\]
the two filtrations are closely related.

\begin{proposition}
For every $r\ge 0$,
\[
C_n^{(r)}\subseteq \Spn_n^{(r)}\subseteq C_n^{(r+1)}.
\]
\end{proposition}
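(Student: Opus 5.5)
The plan is to prove both inclusions using only Corollary~3.7, namely $\Ax_n\subseteq \Spn_n\subseteq C_n^{(1)}$, and the fact that distance to a set is monotone and $1$-Lipschitz along set enlargements. Throughout, $n$ is assumed axial, so $\Ax_n\neq\varnothing$ and all the distances involved are finite. No further combinatorics of partitions is needed.

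For the first inclusion, take $\lambda\in C_n^{(r)}$. Then there is some $\alpha\in\Ax_n$ with $d_{G_n}(\lambda,\alpha)\le r$. Since $\Ax_n\subseteq\Spn_n$, we get
\[
d_{G_n}(\lambda,\Spn_n)=\min_{\sigma\in\Spn_n}d_{G_n}(\lambda,\sigma)\le d_{G_n}(\lambda,\alpha)\le r,
\]
and hence $\lambda\in\Spn_n^{(r)}$.

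For the second inclusion, take $\lambda\in\Spn_n^{(r)}$ and choose $\sigma\in\Spn_n$ with $d_{G_n}(\lambda,\sigma)\le r$. By Corollary~3.7, $\sigma\in C_n^{(1)}$, so there exists $\alpha\in\Ax_n$ with $d_{G_n}(\sigma,\alpha)\le 1$. The triangle inequality for the graph metric then gives
\[
d_{G_n}(\lambda,\alpha)\le d_{G_n}(\lambda,\sigma)+d_{G_n}(\sigma,\alpha)\le r+1,
\]
so $\delta_{\ax}(\lambda)\le r+1$, that is, $\lambda\in C_n^{(r+1)}$.

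There is no real obstacle here. The only point needing care is to note that the minima in the definitions of $\delta_{\ax}$ and $d_{G_n}(\cdot,\Spn_n)$ are attained at finite values, which holds because $G_n$ is finite, $\Ax_n\neq\varnothing$ under the standing axial assumption, and $G_n$ is connected. Connectivity holds because every partition can be moved to $(1^n)$ by successive unit transfers. Even without connectivity the inclusions hold trivially, since a vertex at infinite distance from $\Spn_n$ lies in neither set.
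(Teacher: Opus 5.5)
Your proof is correct and follows the paper's argument essentially verbatim. The first inclusion comes from $\Ax_n\subseteq\Spn_n$ via monotonicity of distance to a set, and the second comes from choosing a spine vertex within distance $r$, using Corollary~3.7 to reach the axis in one step, and applying the triangle inequality (your added remarks on finiteness and connectivity of $G_n$ are harmless extras the paper leaves implicit).
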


\begin{proof}
Since $\Ax_n\subseteq \Spn_n$, every vertex is at least as close to $\Spn_n$ as to $\Ax_n$. Hence if $\lambda\in C_n^{(r)}$, then
\[
d_{G_n}(\lambda,\Spn_n)\le d_{G_n}(\lambda,\Ax_n)\le r,
\]
so $\lambda\in \Spn_n^{(r)}$. This proves
\[
C_n^{(r)}\subseteq \Spn_n^{(r)}.
\]

Conversely, let $\lambda\in \Spn_n^{(r)}$. Then there exists $\nu\in \Spn_n$ such that
\[
d_{G_n}(\lambda,\nu)\le r.
\]
By Corollary~3.7, $\nu\in C_n^{(1)}$, so there exists $\alpha\in\Ax_n$ with
\[
d_{G_n}(\nu,\alpha)\le 1.
\]
By the triangle inequality,
\[
d_{G_n}(\lambda,\alpha)\le d_{G_n}(\lambda,\nu)+d_{G_n}(\nu,\alpha)\le r+1.
\]
Hence $\lambda\in C_n^{(r+1)}$. \qedhere
\end{proof}

\begin{corollary}
The thin spine provides an intermediate neighborhood scale between the axis and the narrow central region:
\[
\Ax_n=C_n^{(0)}\subseteq \Spn_n\subseteq C_n^{(1)}.
\]
\end{corollary}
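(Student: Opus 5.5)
This corollary is the special case $r=0$ of the comparison $C_n^{(r)}\subseteq \Spn_n^{(r)}\subseteq C_n^{(r+1)}$ just proved, combined with the identifications $C_n^{(0)}=\Ax_n$ and $\Spn_n^{(0)}=\Spn_n$. I will read it off from the definitions in two short steps rather than re-derive anything.

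First, the equality $\Ax_n=C_n^{(0)}$ is immediate from the definition of the central regions. A vertex $\lambda$ satisfies $\delta_{\ax}(\lambda)\le 0$ exactly when its distance to $\Ax_n$ is $0$, that is, when $\lambda\in\Ax_n$. This is the remark recorded right after the definition of $C_n^{(r)}$.

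Second, for the two inclusions I will specialize the previous proposition to $r=0$. This gives $C_n^{(0)}\subseteq \Spn_n^{(0)}\subseteq C_n^{(1)}$. Since $\Spn_n^{(0)}=\Spn_n$ by definition of the thick spine, the chain becomes $\Ax_n\subseteq \Spn_n\subseteq C_n^{(1)}$. Alternatively, one can cite Corollary~3.7 directly, which already states exactly these inclusions. Its second inclusion comes from the mediator lemma: every mediator $\nu\in M(\alpha,\beta)$ has $\delta_{\ax}(\nu)=1$.

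There is no real obstacle here. The only point needing care is the standing assumption that $n$ is axial, under which $\delta_{\ax}$ is finite and the regions $C_n^{(r)}$ are meaningful. This assumption is in force throughout the section, so the corollary follows.
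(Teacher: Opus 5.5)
Your proposal is correct and follows the paper's route. The paper gives no separate proof: it treats the corollary as the $r=0$ case of Proposition~4.3 (with $C_n^{(0)}=\Ax_n$ and $\Spn_n^{(0)}=\Spn_n$), which restates Corollary~3.7. Going through Proposition~4.3 is not circular, because its upper inclusion rests on Corollary~3.7, and that was proved independently from the mediator lemma.
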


\begin{remark}
The distance to the spine differs from the distance to the axis by at most one.
\end{remark}

\subsection{Distance distributions}

The two filtrations naturally give rise to radial counting data.

\begin{definition}
For $k\ge 0$, define the axial shell
\[
S_{\ax}(n,k):=\{\lambda\in V(G_n):\delta_{\ax}(\lambda)=k\},
\]
with counting function
\[
s_{\ax}(n,k):=|S_{\ax}(n,k)|.
\]

Similarly, define the spinal shell
\[
S_{\mathrm{sp}}(n,k):=\{\lambda\in V(G_n):d_{G_n}(\lambda,\Spn_n)=k\},
\]
with counting function
\[
s_{\mathrm{sp}}(n,k):=|S_{\mathrm{sp}}(n,k)|.
\]

Thus
\[
|C_n^{(r)}|=\sum_{k=0}^{r}s_{\ax}(n,k),
\qquad
|\Spn_n^{(r)}|=\sum_{k=0}^{r}s_{\mathrm{sp}}(n,k).
\]
\end{definition}

\begin{remark}
These shell distributions provide a natural quantitative language for asking how the vertex mass of $G_n$ is organized around the axis and around the spine.
\end{remark}

\subsection{Concentration radii for extremal local invariants}

We now turn to the location of the most locally complex vertices of $G_n$.

\begin{proposition}
Let $I:V(G_n)\to\mathbb R$ be any vertex invariant. Then
\[
\rho_I^{\mathrm{sp}}(n)\le \rho_I^{\ax}(n)\le \rho_I^{\mathrm{sp}}(n)+1.
\]
\end{proposition}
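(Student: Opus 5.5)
The plan is to derive both inequalities directly from the sandwich in Proposition~4.3,
\[
C_n^{(r)}\subseteq \Spn_n^{(r)}\subseteq C_n^{(r+1)}\qquad(r\ge 0),
\]
together with the definitions of the two radii as minima over $r$. First I will check that both minima exist. Since $n$ is assumed axial and $G_n$ is connected, every vertex lies at finite distance from $\Ax_n$. Hence $C_n^{(r)}=V(G_n)$ for $r$ large (for example $r$ at least the diameter), and the same holds for $\Spn_n^{(r)}$. So both sets of admissible $r$ are nonempty, and both radii are well-defined nonnegative integers.

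For the left inequality, set $r_0:=\rho_I^{\ax}(n)$. Then $\Argmax(I)\subseteq C_n^{(r_0)}$. By the first inclusion of Proposition~4.3, $C_n^{(r_0)}\subseteq \Spn_n^{(r_0)}$. So $r_0$ is admissible in the definition of $\rho_I^{\mathrm{sp}}(n)$, and minimality gives $\rho_I^{\mathrm{sp}}(n)\le r_0$.

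For the right inequality, set $s_0:=\rho_I^{\mathrm{sp}}(n)$. Then $\Argmax(I)\subseteq \Spn_n^{(s_0)}$. By the second inclusion of Proposition~4.3, $\Spn_n^{(s_0)}\subseteq C_n^{(s_0+1)}$. So $s_0+1$ is admissible for the axial radius, and minimality gives $\rho_I^{\ax}(n)\le s_0+1$.

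There is no real obstacle in this argument. The only point needing care is the well-definedness of the radii: this requires the axial hypothesis ($\Ax_n\neq\varnothing$) and connectivity of $G_n$. Connectivity holds because any partition can be moved to $(1,1,\dots,1)$ by repeatedly transferring a unit from a part of size at least $2$ to a new zero part. The argument uses nothing about $I$, so conjugation-invariance of $I$ is not needed here.
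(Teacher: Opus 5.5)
Your proof is correct and follows the paper's argument: both inequalities come from the sandwich $C_n^{(r)}\subseteq \Spn_n^{(r)}\subseteq C_n^{(r+1)}$ of Proposition~4.3, combined with minimality in the definitions of the two radii. You also check that the radii are well defined, using $\Ax_n\neq\varnothing$ and the connectivity of $G_n$; the paper leaves this implicit, and your argument for it is valid.
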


\begin{proof}
If $\Argmax(I)\subseteq C_n^{(r)}$, then Proposition~4.3 gives
\[
\Argmax(I)\subseteq \Spn_n^{(r)},
\]
hence $\rho_I^{\mathrm{sp}}(n)\le r$. Minimizing over such $r$ yields
\[
\rho_I^{\mathrm{sp}}(n)\le \rho_I^{\ax}(n).
\]

Conversely, if $\Argmax(I)\subseteq \Spn_n^{(r)}$, then Proposition~4.3 gives
\[
\Argmax(I)\subseteq C_n^{(r+1)},
\]
hence $\rho_I^{\ax}(n)\le r+1$. Minimizing over $r$ yields
\[
\rho_I^{\ax}(n)\le \rho_I^{\mathrm{sp}}(n)+1.
\]
\qedhere
\end{proof}

\begin{remark}
Thus axial and spinal concentration are equivalent up to an additive error of at most one in the radius.
\end{remark}

\subsection{Symmetry of extremizers}

The conjugation symmetry of $G_n$ imposes immediate constraints on maximizer sets of conjugation-invariant local invariants.

\begin{proposition}
Let $I$ be conjugation-invariant, i.e.
\[
I(\lambda)=I(\lambda')
\qquad
\text{for all }\lambda\in V(G_n).
\]
Then $\Argmax(I)$ is conjugation-invariant.
\end{proposition}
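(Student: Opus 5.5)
The argument is a direct check from the definition of $\Argmax(I)$; it does not need any of the structural results of Section~3. Set $M:=\max_{\mu\in V(G_n)} I(\mu)$. This maximum exists because $V(G_n)$ is finite and nonempty.

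First I will show that conjugation maps $\Argmax(I)$ into itself. Take $\lambda\in\Argmax(I)$, so $I(\lambda)=M$. By the hypothesis $I(\lambda')=I(\lambda)$, we get $I(\lambda')=M$. Since $\lambda'$ is again a partition of $n$, it is a vertex of $G_n$, and therefore $\lambda'\in\Argmax(I)$.

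Next I will upgrade this to equality of sets. The map $\lambda\mapsto\lambda'$ is an involution by Proposition~2.1. Given $\mu\in\Argmax(I)$, the first step gives $\mu'\in\Argmax(I)$, and then $\mu=(\mu')'$ lies in the image of $\Argmax(I)$ under conjugation. Hence conjugation restricts to a bijection of $\Argmax(I)$ onto itself. Equivalently, $\lambda\in\Argmax(I)$ holds if and only if $\lambda'\in\Argmax(I)$.

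There is no real obstacle in this argument. The only point to state explicitly is that conjugation preserves the vertex set, so that the global maximum $M$ is the same quantity on both sides of each comparison. I may also add a remark that the main invariants $\deg$, $\omega_{\loc}$ and $\dim_{\loc}$ are conjugation-invariant, so the proposition applies to them. Each of these is defined purely in terms of the graph structure of $G_n$, and conjugation is a graph automorphism by Proposition~2.1, so it preserves degrees and maps cliques to cliques.
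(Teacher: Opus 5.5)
Your proof is correct and is the paper's argument: from $I(\lambda')=I(\lambda)=\max_{\mu\in V(G_n)}I(\mu)$ you conclude $\lambda'\in\Argmax(I)$. Your extra step, which uses the involution property to obtain $\lambda\in\Argmax(I)\iff\lambda'\in\Argmax(I)$, spells out a point the paper leaves implicit.
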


\begin{proof}
If $\lambda\in\Argmax(I)$, then
\[
I(\lambda')=I(\lambda)=\max_{\mu\in V(G_n)} I(\mu),
\]
so $\lambda'\in\Argmax(I)$. \qedhere
\end{proof}

\begin{corollary}
If $I$ is conjugation-invariant and $|\Argmax(I)|$ is odd, then
\[
\Argmax(I)\cap \Ax_n\neq\varnothing.
\]
\end{corollary}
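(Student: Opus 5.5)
The argument is a parity count using the involution supplied by Proposition~4.7 (conjugation-invariance of $\Argmax(I)$ for conjugation-invariant $I$). Write $X:=\Argmax(I)$. By that proposition, conjugation $\lambda\mapsto\lambda'$ maps $X$ into itself. Because conjugation is an involution on $V(G_n)$ (Proposition~2.1), its restriction to $X$ is an involution of the finite set $X$.

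Next, split $X$ into fixed points and the remaining elements. The fixed points are exactly the $\lambda\in X$ with $\lambda=\lambda'$, so they form $X\cap\Ax_n$. Every other $\lambda\in X$ satisfies $\lambda\neq\lambda'$ and $\lambda'\in X$. It therefore lies in the two-element orbit $\{\lambda,\lambda'\}$, and distinct such orbits are disjoint because $(\lambda')'=\lambda$. This gives
\[
|X| = |X\cap\Ax_n| + 2m,
\]
where $m$ is the number of two-element orbits.

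Finally, since $|X|$ is odd, $|X\cap\Ax_n|\equiv |X| \equiv 1 \pmod 2$, so $|X\cap\Ax_n|\ge 1$. This is the claim. The hypothesis that $n$ is axial, assumed since \S2.3, is not needed here: the conclusion itself produces an axial vertex. None of the steps should cause difficulty. The only point to state carefully is that $X$ is closed under conjugation, so that the non-fixed elements really do pair off inside $X$, and this is exactly what Proposition~4.7 provides.
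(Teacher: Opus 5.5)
Your proof is correct and follows the paper's argument: closure of $\Argmax(I)$ under conjugation lets the non-self-conjugate maximizers pair off as $\{\lambda,\lambda'\}$, so an odd count forces a self-conjugate maximizer. You spell out the orbit count $|X|=|X\cap\Ax_n|+2m$ explicitly where the paper states it in one line; note also that the result you cite for closure of $\Argmax(I)$ is numbered Proposition~4.10 in the paper, not 4.7.
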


\begin{proof}
Since $\Argmax(I)$ is conjugation-invariant, its non-self-conjugate vertices occur in disjoint pairs $\{\lambda,\lambda'\}$. Therefore an odd number of maximizers is possible only if at least one maximizer is self-conjugate. \qedhere
\end{proof}

\begin{remark}
This gives a first strict bridge between axial symmetry and extremal local complexity.
\end{remark}

\subsection{Structural parameters and computational observables}

The framework above singles out a compact family of axial parameters:
\[
a_n:=|\Ax_n|,
\qquad
c_n^{(r)}:=|C_n^{(r)}|,
\qquad
\sigma_n:=|\Spn_n|,
\qquad
\sigma_n^{(r)}:=|\Spn_n^{(r)}|.
\]
For a chosen invariant $I$, one also has the extremal observables
\[
\rho_I^{\ax}(n),\qquad \rho_I^{\mathrm{sp}}(n),
\]
together with the shell counts $s_{\ax}(n,k)$ and $s_{\mathrm{sp}}(n,k)$.

These parameters will be used both as structural invariants of $G_n$ and as empirical diagnostics for concentration near the axis and the spine.

\section{Main Results}

This section collects the main outcomes of the paper. We distinguish three levels: structural results, computational concentration results in the tested range, and conjectural extensions.

\subsection{Structural results}

\begin{theorem}
Let $n$ be axial. Then:
\begin{enumerate}[label=(\arabic*)]
\item the self-conjugate axis
\[
\Ax_n=\{\lambda\vdash n:\lambda=\lambda'\}
\]
is the fixed-point set of the conjugation involution on $G_n$;

\item the induced subgraph $G_n[\Ax_n]$ is edgeless;

\item the thin spine
\[
\Spn_n=\Ax_n\cup \bigcup_{\{\alpha,\beta\}\in E(\mathcal A_n)} M(\alpha,\beta)
\]
is a canonical conjugation-invariant induced subgraph of $G_n$;

\item one has
\[
\Ax_n\subseteq \Spn_n\subseteq C_n^{(1)}.
\]
\end{enumerate}
\end{theorem}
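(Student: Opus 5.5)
This theorem is a collection of results already established in Sections~2 and~3, so the plan is to assemble them item by item rather than prove anything new.

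For item~(1), Proposition~2.1 shows that conjugation is an involutive automorphism of $G_n$. By the definition of $\Ax_n$, its fixed points are exactly the partitions with $\lambda=\lambda'$, which is the content of Remark~2.4.

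For item~(2), Corollary~3.3 applies directly. It rests on Theorem~3.2, whose argument is the only substantive one in the chain. If $\alpha\sim\beta$ with both self-conjugate, the removed cell and the added cell are each fixed by transposition, so both lie on the diagonal. This contradicts Lemma~3.1, which rules out a partition having both a removable and an addable diagonal corner.

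For item~(3), Proposition~3.6 gives the claim directly. Canonicity holds because the definition of $\Spn_n$ uses only $\Ax_n$, adjacency in $G_n$, and common neighbours. Conjugation-invariance holds because each mediator set $M(\alpha,\beta)$ is mapped to itself by conjugation, since conjugation is an automorphism fixing $\alpha$ and $\beta$.

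For item~(4), Corollary~3.7 applies. The first inclusion holds by definition, and the second uses Lemma~3.5, which shows that every mediator lies at axial distance exactly $1$.

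There is no real obstacle: the only nontrivial ingredient, the diagonal-corner case analysis in Lemma~3.1, has already been carried out. The one point to state explicitly is that $n$ is assumed axial. This guarantees $\Ax_n\neq\varnothing$, so $\delta_{\ax}$ and $C_n^{(1)}$ in item~(4) are well defined.
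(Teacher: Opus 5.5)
Your proposal is correct and matches the paper's proof, which likewise assembles (1) from Definition~2.2 and Proposition~2.1, (2) from Corollary~3.3, (3) from Proposition~3.6, and (4) from Corollary~3.7. Your added remark that axiality is what makes $\delta_{\ax}$ and $C_n^{(1)}$ well defined is accurate and harmless.
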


\begin{proof}
Part~(1) follows from Definition~2.2 and Proposition~2.1. Part~(2) is Corollary~3.3. Part~(3) is Proposition~3.6. Part~(4) is Corollary~3.7. \qedhere
\end{proof}

The partition-theoretic background for the axis comes from the classical theory of self-conjugate partitions~\cite{Andrews1998Theory,AndrewsEriksson2004Integer}, while the graph-theoretic framework for $G_n$ used here is developed in~\cite{Lyudogovskiy2026Clique,Lyudogovskiy2026Local}.

\begin{theorem}
Let $n$ be axial, and let $I:V(G_n)\to\mathbb R$ be any vertex invariant. Then
\[
\rho_I^{\mathrm{sp}}(n)\le \rho_I^{\ax}(n)\le \rho_I^{\mathrm{sp}}(n)+1.
\]
If $I$ is conjugation-invariant, then $\Argmax(I)$ is conjugation-invariant. If, in addition, $|\Argmax(I)|$ is odd, then
\[
\Argmax(I)\cap \Ax_n\neq\varnothing.
\]
\end{theorem}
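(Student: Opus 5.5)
This theorem packages results already proved in Section~4, so the plan is to assemble them rather than argue anything new. The statement has three parts: the two-sided comparison of radii, the conjugation-invariance of $\Argmax(I)$, and the parity consequence. I will treat each in turn, citing the corresponding earlier proposition.

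For the inequality $\rho_I^{\mathrm{sp}}(n)\le \rho_I^{\ax}(n)\le \rho_I^{\mathrm{sp}}(n)+1$, I will invoke Proposition~4.5 directly. Behind it is the sandwich $C_n^{(r)}\subseteq \Spn_n^{(r)}\subseteq C_n^{(r+1)}$ of Proposition~4.3, which rests on Corollary~3.7 ($\Spn_n\subseteq C_n^{(1)}$) and the triangle inequality. One should first note that both radii are finite, since $G_n$ is finite and $n$ is axial. I will also check connectivity of $G_n$: any partition reaches $(1,\dots,1)$ by repeated unit transfers, so every vertex has finite distance to the nonempty set $\Ax_n$. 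Hence the minima in the definitions are attained. Taking $r=\rho_I^{\ax}(n)$ in the first inclusion gives the left inequality, and taking $r=\rho_I^{\mathrm{sp}}(n)$ in the second gives the right one.

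For the symmetry statement, I will cite Proposition~4.7. The point is that $I(\lambda')=I(\lambda)$, so conjugation maps maximizers to maximizers.

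For the parity statement, I will cite Corollary~4.8. Conjugation restricts to an involution of $\Argmax(I)$, and its orbits have size $1$ or $2$. A fixed point is exactly a self-conjugate partition, so an odd cardinality forces at least one fixed point, which lies in $\Argmax(I)\cap\Ax_n$.

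The only step needing any care is the finiteness of the radii, that is, connectivity of $G_n$ together with $\Ax_n\neq\varnothing$. The axial hypothesis is used precisely so that $\delta_{\ax}$ is defined and finite. Everything else is a direct citation.
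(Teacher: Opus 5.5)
Your proposal is correct and matches the paper's proof, which likewise just cites the radius comparison (itself derived from $C_n^{(r)}\subseteq \Spn_n^{(r)}\subseteq C_n^{(r+1)}$), the conjugation-invariance of $\Argmax(I)$, and the parity corollary from Section~4; in the paper's numbering these are Proposition~4.8, Proposition~4.10 and Corollary~4.11, not 4.5, 4.7 and 4.8. Your additional check that $G_n$ is connected and $\Ax_n\neq\varnothing$, so that both radii are finite and the minima are attained, is sound and is a point the paper leaves implicit.
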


\begin{proof}
This is Proposition~4.8 together with Proposition~4.10 and Corollary~4.11. \qedhere
\end{proof}

These results show that the axis is canonical but too thin to connect axial vertices by itself, while the thin spine provides an off-axis enlargement. They also provide the basic quantitative relation between axial and spinal concentration.

\subsection{Computational theorem}

Let
\[
\mathcal I_{\loc}:=\{\deg,\omega_{\loc},\dim_{\loc}\}.
\]

\begin{theorem}
For every axial $n$ with
\[
1\le n\le 30,
\]
one has
\[
\rho_{\deg}^{\ax}(n)\le 2,
\qquad
\rho_{\deg}^{\mathrm{sp}}(n)\le 2,
\]
and
\[
\rho_{\omega}^{\ax}(n),\ \rho_{\omega}^{\mathrm{sp}}(n),\ \rho_{\dim}^{\ax}(n),\ \rho_{\dim}^{\mathrm{sp}}(n)\le 4.
\]
\end{theorem}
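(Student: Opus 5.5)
This is a finite computational statement, so the plan is an exhaustive, certified computation over the axial $n$ with $1\le n\le 30$. The axisless values are $n=2$ and $n=5$, since every other $n$ is a sum of distinct odd parts. The largest case is $p(30)=5604$ vertices, which is small. For each axial $n$ I will first generate all partitions of $n$ and build $G_n$ directly from the definition. For each $\lambda$ and each ordered pair of distinct parts (including a new zero part as receiver), perform the transfer, reorder, and record the edge. As a sanity check I will verify that the conjugation map $\lambda\mapsto\lambda'$ preserves adjacency, as in Proposition~2.1. I will also verify that $G_n[\Ax_n]$ is edgeless, as in Theorem~3.2.

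Next I will compute the axial objects.
\begin{enumerate}[label=(\roman*)]
\item $\Ax_n$ as the fixed points of conjugation.
\item $\delta_{\ax}$ for every vertex, via a single multi-source breadth-first search from $\Ax_n$.
\item $\Spn_n$, using Proposition~3.8: a non-axial $\nu$ lies in $\Spn_n$ exactly when $|N(\nu)\cap\Ax_n|\ge 2$.
\item The distance to $\Spn_n$, via a second multi-source breadth-first search.
\end{enumerate}

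I will then compute the three invariants on every vertex. The degree is immediate. For $\omega_{\loc}(v)$ I will find the maximum clique in the induced subgraph on $N(v)$ and add one; $\dim_{\loc}=\omega_{\loc}-1$ then has the same maximizer set. Once the maximizer sets are in hand, the radii are
\[
\rho_I^{\ax}(n)=\max_{\lambda\in\Argmax(I)}\delta_{\ax}(\lambda),\qquad \rho_I^{\mathrm{sp}}(n)=\max_{\lambda\in\Argmax(I)}d_{G_n}(\lambda,\Spn_n),
\]
and the claimed bounds are checked one $n$ at a time and tabulated.

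Some theory reduces the work. Proposition~4.8 gives $\rho^{\mathrm{sp}}\le\rho^{\ax}$, so it suffices to establish the axial bounds; the spinal ones follow. Since $\dim_{\loc}$ and $\omega_{\loc}$ share their maximizers, only $\omega_{\loc}$ needs a computation. Proposition~4.10 makes each $\Argmax$ closed under conjugation, which serves as a consistency check on the output.

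The main obstacle is computing $\omega_{\loc}$ exactly and reliably for the larger $n$. Neighborhoods in $G_n$ have size $O(\ell^2)$, where $\ell$ is the number of distinct part values, so they remain small. I would still first try Bron--Kerbosch with pivoting on each $G_n[N(v)]$, which is cheap at this size. To guard against implementation error I would cross-check the local clique numbers against an independent structural description of cliques. Elementary transfers sharing a common donor part, or a common receiver part, form cliques; this gives lower bounds against which the computed maxima can be compared. I would also reproduce the values for small $n$ by hand. With those checks passed, the theorem follows by inspection of the resulting table.
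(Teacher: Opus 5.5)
Your plan is the same as the paper's proof. You exhaustively build $G_n$, compute $\Ax_n$, $\Spn_n$, the two distance functions by multi-source search, and $\omega_{\loc}(v)=1+\omega(G_n[N(v)])$, then read the radii off the resulting table. Two of your steps are valid economies the paper does not use, since it simply tabulates all six radii: deducing the spinal bounds from the axial ones via Proposition~4.8, and computing only $\omega_{\loc}$ because $\Argmax(\dim_{\loc})=\Argmax(\omega_{\loc})$.

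There is, however, a concrete error in your list of cases: $n=5$ is not axisless. The number $5$ is itself a partition into one distinct odd part, and correspondingly $(3,1,1)$ is self-conjugate. More generally, every odd $n$ is an odd part and every even $n\ge 4$ equals $1+(n-1)$. So $n=2$ is the only axisless value, as Table~\ref{tab:basic-axial} records ($a_5=1$). If you drop $n=5$ as your proposal states, your verification omits a case the theorem covers. You must include it; the paper's data give $\rho_{\deg}^{\ax}(5)=0$ and $\rho_{\omega}^{\ax}(5)=\rho_{\dim}^{\ax}(5)=1$, so the bounds do hold there.

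A smaller implementation point concerns edge construction. A transfer from a part of size $b+1$ to a part of size $b$ returns $\lambda$ itself; this includes moving a part equal to $1$ into a new zero part. Your procedure (``perform the transfer, reorder, and record the edge'') would record these as loops. You must discard them, or the degree counts, and possibly $\Argmax(\deg)$, will be wrong.
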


\begin{proof}
Computational. For each axial $n$ with $1\le n\le 30$, we exhaustively construct the partition graph $G_n$, compute the invariants in $\mathcal I_{\loc}$ at every vertex, determine the corresponding maximizer sets, and then evaluate the axial and spinal concentration radii. The resulting values are listed explicitly in Table~\ref{tab:extremal-location}; the stated bounds are obtained by direct inspection of that table and of the accompanying supplementary CSV dataset. \qedhere
\end{proof}

These computational statements should be viewed against the background of earlier local and topological analyses of $G_n$ in~\cite{Lyudogovskiy2026Clique,Lyudogovskiy2026Local}, rather than as stand-alone asymptotic results.

\subsection{Conjectural continuation}

\begin{conjecture}
For each invariant $I\in\mathcal I_{\loc}$, there exists a constant $r_I$ such that
\[
\rho_I^{\ax}(n)\le r_I
\]
for all sufficiently large axial $n$.
\end{conjecture}

\begin{conjecture}
For each invariant $I\in\mathcal I_{\loc}$, there exists a constant $s_I$ such that
\[
\rho_I^{\mathrm{sp}}(n)\le s_I
\]
for all sufficiently large axial $n$.
\end{conjecture}

By Theorem~5.2, these two bounded-concentration statements differ by at most an additive error of one in the radius. The present paper does not address their asymptotic validity.

\section{Computational Profiles of Axial Concentration}

\subsection{Computational setup}

For each tested value of $n$, we generated all partitions of $n$ recursively in nonincreasing order, constructed the graph $G_n$ by applying all elementary transfers to every partition, identified the self-conjugate axis $\Ax_n$, computed the thin spine $\Spn_n$ from common-neighbor relations among axial vertices, and evaluated the local invariants
\[
\deg,\qquad \omega_{\loc},\qquad \dim_{\loc}
\]
at every vertex. These are the same basic local invariants used in the local analysis of $G_n$ in~\cite{Lyudogovskiy2026Local}.

In the present version of the paper, the baseline range is $1\le n\le 30$. Since $p(30)=5604$, exhaustive construction of the graphs in this range is feasible without special hardware. The computations were carried out in Python~3 using NetworkX. Distances from the axis and from the spine were obtained by multi-source shortest-path computations. For each vertex $v$, the local clique number was computed as
\[
\omega_{\loc}(v)=1+\omega\bigl(G_n[N(v)]\bigr),
\]
where $N(v)$ is the open neighborhood of $v$, by exhaustive clique enumeration on the induced neighborhood graph using the built-in clique routines. In the tested range the maximum observed value of $\omega_{\loc}$ is $8$, so these neighborhood-level clique searches remain small. We then computed the maximizer sets and the corresponding axial and spinal concentration radii.

Axisless values of $n$ are recorded separately and excluded from axial and spinal concentration statistics. The full computational output for this range is supplied in the accompanying supplementary CSV dataset, and a Python script reproducing the computations accompanies the present draft.

As in Section~2.6, we use the convention
\[
\dim_{\loc}(v)=\omega_{\loc}(v)-1.
\]

\subsection{Basic axial parameters}

Table~\ref{tab:basic-axial} records the basic axial parameters
\[
a_n=|\Ax_n|,\qquad \sigma_n=|\Spn_n|,\qquad c_n^{(1)}=|C_n^{(1)}|.
\]

For general partition-theoretic background and notation, see~\cite{Andrews1998Theory,AndrewsEriksson2004Integer}.

In the tested range, the only axisless case is $n=2$. For axial $n$, the ratio
\[
a_n/p(n)
\]
decreases rapidly, indicating that the axis forms a small core of the graph. The thin spine is sometimes larger than the axis, but not always: for some values, such as $n=9$ and $n=11$, one has
\[
|\Spn_n|=|\Ax_n|.
\]
Thus the spinal enlargement is genuine but not automatic. The present computations stop at $n=30$ and are intended as a finite-range verification rather than as asymptotic evidence by themselves.

\begingroup
\footnotesize
\begin{longtable}{@{}rrrrrrrrr@{}}
\caption{Basic axial parameters of $G_n$ for $1\le n\le 30$.}\label{tab:basic-axial}\\
\toprule
$n$ & $p(n)$ & axial? & $a_n$ & $\sigma_n$ & $c_n^{(1)}$ & $a_n/p(n)$ & $\sigma_n/p(n)$ & $c_n^{(1)}/p(n)$ \\
\midrule
\endfirsthead
\toprule
$n$ & $p(n)$ & axial? & $a_n$ & $\sigma_n$ & $c_n^{(1)}$ & $a_n/p(n)$ & $\sigma_n/p(n)$ & $c_n^{(1)}/p(n)$ \\
\midrule
\endhead
1 & 1 & yes & 1 & 1 & 1 & 1.0000 & 1.0000 & 1.0000 \\
2 & 2 & no & 0 & -- & -- & 0.0000 & -- & -- \\
3 & 3 & yes & 1 & 1 & 3 & 0.3333 & 0.3333 & 1.0000 \\
4 & 5 & yes & 1 & 1 & 3 & 0.2000 & 0.2000 & 0.6000 \\
5 & 7 & yes & 1 & 1 & 5 & 0.1429 & 0.1429 & 0.7143 \\
6 & 11 & yes & 1 & 1 & 7 & 0.0909 & 0.0909 & 0.6364 \\
7 & 15 & yes & 1 & 1 & 5 & 0.0667 & 0.0667 & 0.3333 \\
8 & 22 & yes & 2 & 6 & 10 & 0.0909 & 0.2727 & 0.4545 \\
9 & 30 & yes & 2 & 2 & 8 & 0.0667 & 0.0667 & 0.2667 \\
10 & 42 & yes & 2 & 6 & 18 & 0.0476 & 0.1429 & 0.4286 \\
11 & 56 & yes & 2 & 2 & 14 & 0.0357 & 0.0357 & 0.2500 \\
12 & 77 & yes & 3 & 11 & 23 & 0.0390 & 0.1429 & 0.2987 \\
13 & 101 & yes & 3 & 7 & 21 & 0.0297 & 0.0693 & 0.2079 \\
14 & 135 & yes & 3 & 11 & 31 & 0.0222 & 0.0815 & 0.2296 \\
15 & 176 & yes & 4 & 12 & 34 & 0.0227 & 0.0682 & 0.1932 \\
16 & 231 & yes & 5 & 17 & 39 & 0.0216 & 0.0736 & 0.1688 \\
17 & 297 & yes & 5 & 21 & 49 & 0.0168 & 0.0707 & 0.1650 \\
18 & 385 & yes & 5 & 17 & 53 & 0.0130 & 0.0442 & 0.1377 \\
19 & 490 & yes & 6 & 30 & 70 & 0.0122 & 0.0612 & 0.1429 \\
20 & 627 & yes & 7 & 27 & 67 & 0.0112 & 0.0431 & 0.1069 \\
21 & 792 & yes & 8 & 44 & 94 & 0.0101 & 0.0556 & 0.1187 \\
22 & 1002 & yes & 8 & 32 & 92 & 0.0080 & 0.0319 & 0.0918 \\
23 & 1255 & yes & 9 & 57 & 123 & 0.0072 & 0.0454 & 0.0980 \\
24 & 1575 & yes & 11 & 51 & 119 & 0.0070 & 0.0324 & 0.0756 \\
25 & 1958 & yes & 12 & 76 & 158 & 0.0061 & 0.0388 & 0.0807 \\
26 & 2436 & yes & 12 & 64 & 162 & 0.0049 & 0.0263 & 0.0665 \\
27 & 3010 & yes & 14 & 94 & 202 & 0.0047 & 0.0312 & 0.0671 \\
28 & 3718 & yes & 16 & 96 & 208 & 0.0043 & 0.0258 & 0.0559 \\
29 & 4565 & yes & 17 & 121 & 251 & 0.0037 & 0.0265 & 0.0550 \\
30 & 5604 & yes & 18 & 122 & 276 & 0.0032 & 0.0218 & 0.0493 \\
\bottomrule
\end{longtable}
\endgroup

\subsection{Extremal local invariants}

Table~\ref{tab:extremal-location} records the location of the maximizer sets for
\[
\deg,\qquad \omega_{\loc},\qquad \dim_{\loc}.
\]

The sharpest concentration occurs for the degree invariant:
\[
\rho_{\deg}^{\ax}(n)\le 2,\qquad \rho_{\deg}^{\mathrm{sp}}(n)\le 2
\]
throughout the tested range. For the clique-based invariants, the concentration is weaker but still narrow:
\[
\rho_{\omega}^{\ax}(n),\ \rho_{\omega}^{\mathrm{sp}}(n),\ \rho_{\dim}^{\ax}(n),\ \rho_{\dim}^{\mathrm{sp}}(n)\le 4.
\]

Thus, in the tested range, extremal local complexity remains confined to uniformly bounded neighborhoods of the axis and the spine.

\begingroup
\footnotesize
\setlength{\LTleft}{0pt}
\setlength{\LTright}{0pt}
\begin{longtable}{@{}llrrrrr@{}}
\caption{Location of maximizer sets for the principal local invariants.}\label{tab:extremal-location}\\
\toprule
$n$ & invariant & max & $|\Argmax|$ & $|\Argmax\cap \Ax_n|$ & $\rho^{\ax}$ & $\rho^{\mathrm{sp}}$ \\
\midrule
\endfirsthead
\toprule
$n$ & invariant & max & $|\Argmax|$ & $|\Argmax\cap \Ax_n|$ & $\rho^{\ax}$ & $\rho^{\mathrm{sp}}$ \\
\midrule
\endhead
1 & deg & 0 & 1 & 1 & 0 & 0 \\
2 & deg & 1 & 2 & -- & -- & -- \\
3 & deg & 2 & 1 & 1 & 0 & 0 \\
4 & deg & 3 & 2 & 0 & 1 & 1 \\
5 & deg & 4 & 1 & 1 & 0 & 0 \\
6 & deg & 6 & 1 & 1 & 0 & 0 \\
7 & deg & 7 & 2 & 0 & 1 & 1 \\
8 & deg & 8 & 1 & 1 & 0 & 0 \\
9 & deg & 8 & 6 & 0 & 2 & 2 \\
10 & deg & 12 & 1 & 1 & 0 & 0 \\
11 & deg & 13 & 2 & 0 & 1 & 1 \\
12 & deg & 14 & 1 & 1 & 0 & 0 \\
13 & deg & 14 & 6 & 0 & 2 & 1 \\
14 & deg & 15 & 2 & 0 & 1 & 0 \\
15 & deg & 20 & 1 & 1 & 0 & 0 \\
16 & deg & 21 & 2 & 0 & 1 & 1 \\
17 & deg & 22 & 1 & 1 & 0 & 0 \\
18 & deg & 22 & 6 & 0 & 2 & 1 \\
19 & deg & 23 & 2 & 0 & 1 & 0 \\
20 & deg & 23 & 8 & 0 & 2 & 2 \\
21 & deg & 30 & 1 & 1 & 0 & 0 \\
22 & deg & 31 & 2 & 0 & 1 & 1 \\
23 & deg & 32 & 1 & 1 & 0 & 0 \\
24 & deg & 32 & 6 & 0 & 2 & 1 \\
25 & deg & 33 & 2 & 0 & 1 & 0 \\
26 & deg & 33 & 8 & 0 & 2 & 1 \\
27 & deg & 34 & 1 & 1 & 0 & 0 \\
28 & deg & 42 & 1 & 1 & 0 & 0 \\
29 & deg & 43 & 2 & 0 & 1 & 1 \\
30 & deg & 44 & 1 & 1 & 0 & 0 \\
\midrule
1 & $\omega_{\loc}$ & 1 & 1 & 1 & 0 & 0 \\
2 & $\omega_{\loc}$ & 2 & 2 & -- & -- & -- \\
3 & $\omega_{\loc}$ & 2 & 3 & 1 & 1 & 1 \\
4 & $\omega_{\loc}$ & 3 & 3 & 1 & 1 & 1 \\
5 & $\omega_{\loc}$ & 3 & 5 & 1 & 1 & 1 \\
6 & $\omega_{\loc}$ & 3 & 9 & 1 & 2 & 2 \\
7 & $\omega_{\loc}$ & 4 & 4 & 0 & 2 & 2 \\
8 & $\omega_{\loc}$ & 4 & 7 & 1 & 1 & 1 \\
9 & $\omega_{\loc}$ & 4 & 14 & 0 & 2 & 2 \\
10 & $\omega_{\loc}$ & 4 & 24 & 2 & 2 & 2 \\
11 & $\omega_{\loc}$ & 5 & 5 & 1 & 1 & 1 \\
12 & $\omega_{\loc}$ & 5 & 9 & 1 & 1 & 1 \\
13 & $\omega_{\loc}$ & 5 & 19 & 1 & 2 & 2 \\
14 & $\omega_{\loc}$ & 5 & 34 & 2 & 2 & 2 \\
15 & $\omega_{\loc}$ & 5 & 60 & 2 & 3 & 3 \\
16 & $\omega_{\loc}$ & 6 & 6 & 0 & 2 & 2 \\
17 & $\omega_{\loc}$ & 6 & 11 & 1 & 1 & 1 \\
18 & $\omega_{\loc}$ & 6 & 24 & 0 & 2 & 2 \\
19 & $\omega_{\loc}$ & 6 & 44 & 2 & 2 & 2 \\
20 & $\omega_{\loc}$ & 6 & 80 & 0 & 3 & 3 \\
21 & $\omega_{\loc}$ & 6 & 133 & 3 & 3 & 3 \\
22 & $\omega_{\loc}$ & 7 & 7 & 1 & 1 & 1 \\
23 & $\omega_{\loc}$ & 7 & 13 & 1 & 1 & 1 \\
24 & $\omega_{\loc}$ & 7 & 29 & 1 & 2 & 2 \\
25 & $\omega_{\loc}$ & 7 & 54 & 2 & 2 & 2 \\
26 & $\omega_{\loc}$ & 7 & 100 & 2 & 3 & 3 \\
27 & $\omega_{\loc}$ & 7 & 169 & 3 & 3 & 3 \\
28 & $\omega_{\loc}$ & 7 & 287 & 3 & 4 & 4 \\
29 & $\omega_{\loc}$ & 8 & 8 & 0 & 2 & 2 \\
30 & $\omega_{\loc}$ & 8 & 15 & 1 & 1 & 1 \\
\midrule
1 & $\dim_{\loc}$ & 0 & 1 & 1 & 0 & 0 \\
2 & $\dim_{\loc}$ & 1 & 2 & -- & -- & -- \\
3 & $\dim_{\loc}$ & 1 & 3 & 1 & 1 & 1 \\
4 & $\dim_{\loc}$ & 2 & 3 & 1 & 1 & 1 \\
5 & $\dim_{\loc}$ & 2 & 5 & 1 & 1 & 1 \\
6 & $\dim_{\loc}$ & 2 & 9 & 1 & 2 & 2 \\
7 & $\dim_{\loc}$ & 3 & 4 & 0 & 2 & 2 \\
8 & $\dim_{\loc}$ & 3 & 7 & 1 & 1 & 1 \\
9 & $\dim_{\loc}$ & 3 & 14 & 0 & 2 & 2 \\
10 & $\dim_{\loc}$ & 3 & 24 & 2 & 2 & 2 \\
11 & $\dim_{\loc}$ & 4 & 5 & 1 & 1 & 1 \\
12 & $\dim_{\loc}$ & 4 & 9 & 1 & 1 & 1 \\
13 & $\dim_{\loc}$ & 4 & 19 & 1 & 2 & 2 \\
14 & $\dim_{\loc}$ & 4 & 34 & 2 & 2 & 2 \\
15 & $\dim_{\loc}$ & 4 & 60 & 2 & 3 & 3 \\
16 & $\dim_{\loc}$ & 5 & 6 & 0 & 2 & 2 \\
17 & $\dim_{\loc}$ & 5 & 11 & 1 & 1 & 1 \\
18 & $\dim_{\loc}$ & 5 & 24 & 0 & 2 & 2 \\
19 & $\dim_{\loc}$ & 5 & 44 & 2 & 2 & 2 \\
20 & $\dim_{\loc}$ & 5 & 80 & 0 & 3 & 3 \\
21 & $\dim_{\loc}$ & 5 & 133 & 3 & 3 & 3 \\
22 & $\dim_{\loc}$ & 6 & 7 & 1 & 1 & 1 \\
23 & $\dim_{\loc}$ & 6 & 13 & 1 & 1 & 1 \\
24 & $\dim_{\loc}$ & 6 & 29 & 1 & 2 & 2 \\
25 & $\dim_{\loc}$ & 6 & 54 & 2 & 2 & 2 \\
26 & $\dim_{\loc}$ & 6 & 100 & 2 & 3 & 3 \\
27 & $\dim_{\loc}$ & 6 & 169 & 3 & 3 & 3 \\
28 & $\dim_{\loc}$ & 6 & 287 & 3 & 4 & 4 \\
29 & $\dim_{\loc}$ & 7 & 8 & 0 & 2 & 2 \\
30 & $\dim_{\loc}$ & 7 & 15 & 1 & 1 & 1 \\
\bottomrule
\end{longtable}
\endgroup

\subsection{Axial versus spinal concentration}

The strict bound
\[
\rho_I^{\mathrm{sp}}(n)\le \rho_I^{\ax}(n)\le \rho_I^{\mathrm{sp}}(n)+1
\]
is fully consistent with the computations. In many cases the two radii coincide; in some cases the spinal radius is smaller. Thus the thin spine sometimes gives a sharper localization of extremal structure, but the data do not justify a stronger uniform claim.

\subsection{Representative examples}

Several examples illustrate distinct aspects of axial morphology. Figure~\ref{fig:axial-small} shows six smaller graphs, arranged in pairs, and Figure~\ref{fig:axial-large} shows four larger examples, arranged one per row. In both figures, red nodes lie on the self-conjugate axis, orange nodes lie in the thin spine but not on the axis, blue nodes lie in $C_n^{(1)}\setminus \Spn_n$, and gray nodes lie outside the narrow central region.

\clearpage
\begin{figure}[p]
\centering
\includegraphics[width=0.96\textwidth]{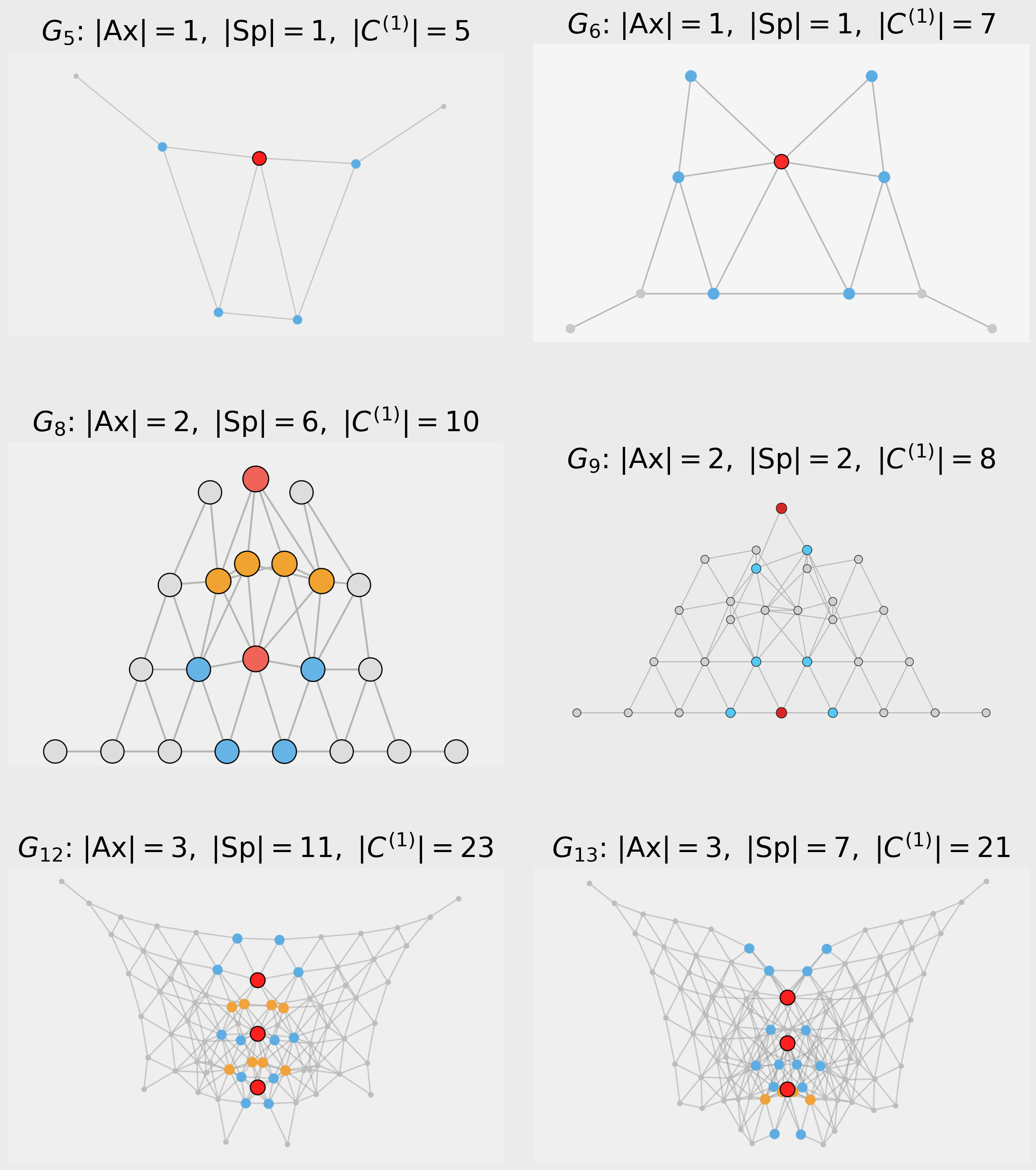}
\caption{Smaller representative examples of axial morphology for $n=5,6,8,9,12,13$.}
\label{fig:axial-small}
\end{figure}

\begin{figure}[p]
\centering
\includegraphics[width=0.72\textwidth]{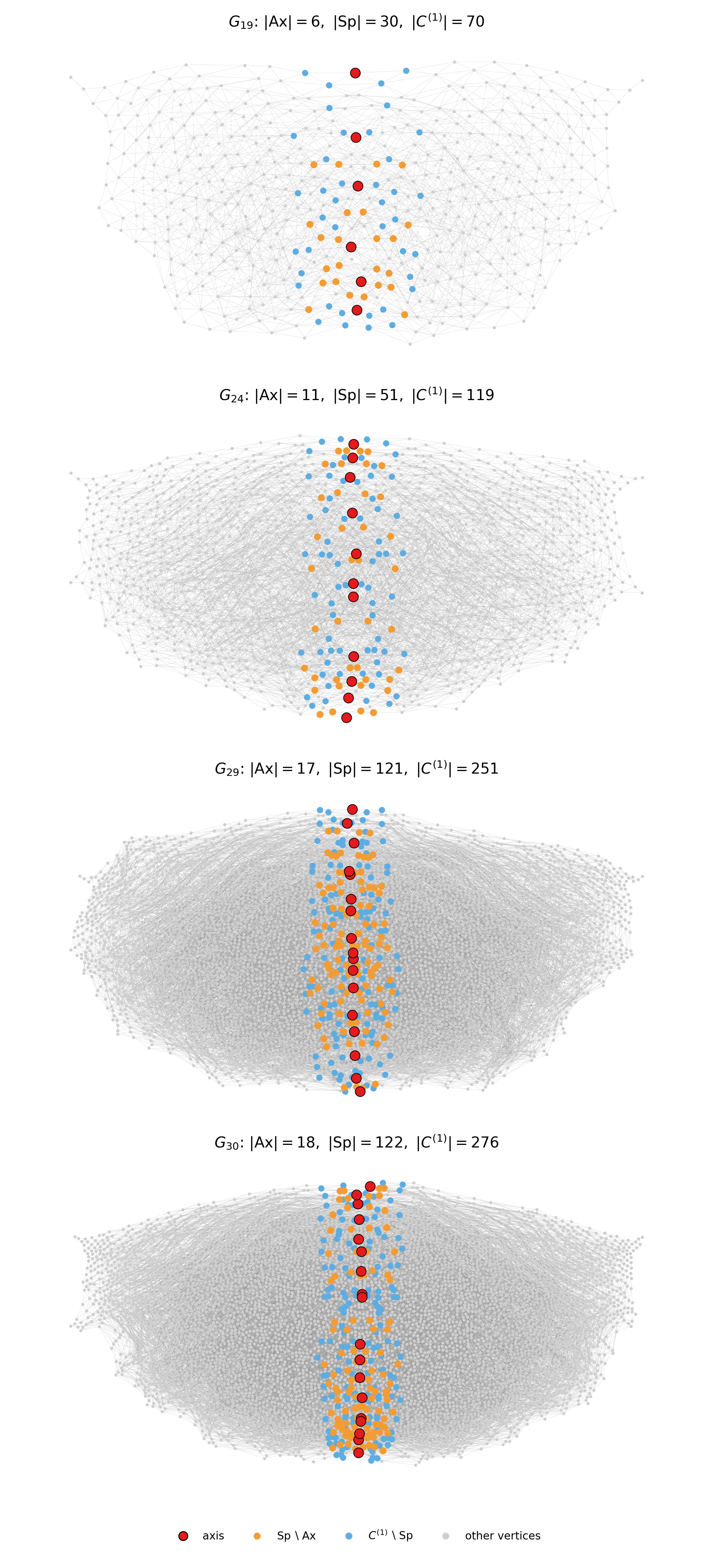}
\caption{Larger representative examples of axial morphology for $n=19,24,29,30$.}
\label{fig:axial-large}
\end{figure}

\paragraph{Example 6.5.1. Compact axial cases: $n=5$ and $n=6$.}
For $n=5$ and $n=6$, the axis consists of a single self-conjugate vertex and the thin spine coincides with the axis:
\[
|\Ax_5|=|\Spn_5|=1,
\qquad
|\Ax_6|=|\Spn_6|=1.
\]
These examples show that the narrow central region may already contain several off-axis vertices even when no axial interaction is present.

\paragraph{Example 6.5.2. The first nontrivial two-vertex axis: $n=8$ and $n=9$.}
For $n=8$, the spine is already substantially larger than the axis:
\[
|\Ax_8|=2,
\qquad
|\Spn_8|=6.
\]
By contrast, for $n=9$ the spine collapses to the axis:
\[
|\Ax_9|=|\Spn_9|=2.
\]
Together these two graphs show that a nontrivial axis does not automatically produce a nontrivial spinal enlargement.

\paragraph{Example 6.5.3. Developed small axial cores: $n=12$ and $n=13$.}
For $n=12$ one has
\[
|\Ax_{12}|=3,
\qquad
|\Spn_{12}|=11,
\qquad
|C_{12}^{(1)}|=23,
\]
and all three local invariants are tightly concentrated near the axis. For $n=13$, the axis still has size $3$, but the spine is smaller,
\[
|\Spn_{13}|=7,
\qquad
|C_{13}^{(1)}|=21,
\]
showing that the central geometry can change noticeably even between consecutive values of $n$.

\paragraph{Example 6.5.4. Larger spinal profiles: $n=19,24,29,30$.}
For $n=19$, the spinal viewpoint improves localization for degree:
\[
\rho_{\deg}^{\ax}(19)=1,
\qquad
\rho_{\deg}^{\mathrm{sp}}(19)=0.
\]
The graphs for $n=24,29,30$ show the same qualitative picture on a larger scale: the axis remains a narrow distinguished set, the thin spine forms a thicker highlighted strip around it, and the first central neighborhood $C_n^{(1)}$ extends beyond the spine. In particular,
\[
|\Ax_{24}|=11,
\ |\Spn_{24}|=51,
\ |C_{24}^{(1)}|=119,
\]
\[
|\Ax_{29}|=17,
\ |\Spn_{29}|=121,
\ |C_{29}^{(1)}|=251,
\]
and
\[
|\Ax_{30}|=18,
\ |\Spn_{30}|=122,
\ |C_{30}^{(1)}|=276.
\]
These examples make visible the distinction between the thin spine and the larger central region.

The highlighted vertices in these figures should therefore be read as follows: the axis records the self-conjugate core itself, the thin spine records those off-axis vertices that simultaneously touch two distinct axial vertices, and the blue region consists of vertices that are still at axial distance $1$ but do not play this mediating role.

\subsection{Axisless case}

In the tested range, the only axisless graph is $G_2$. Its two vertices,
\[
(2),\qquad (1,1),
\]
are conjugate and adjacent, but neither is self-conjugate. Hence $\Ax_2=\varnothing$.

Axisless graphs lie outside the direct scope of the present framework, since axial distance, central regions, and the thin spine are no longer defined in their present form. They are therefore recorded separately.

\section{Conclusion}

We introduced an axial framework for the partition graph $G_n$ based on the self-conjugate axis $\Ax_n$, its distance neighborhoods, and the thin spine $\Spn_n$.

The main strict results are these: the axis is the fixed-point set of conjugation, distinct self-conjugate vertices are never adjacent, the thin spine is a canonical conjugation-invariant induced subgraph, and axial and spinal concentration radii differ by at most one. These statements isolate a definite structural layer of the graph.

The computational results show that, in the tested range $1\le n\le 30$, maximizers of the principal local invariants remain close to the axis and the spine. This concentration is especially sharp for degree and remains visible for the clique-based invariants.

Taken together, these results justify the use of the axis and the spine as natural objects for describing the structure of $G_n$. The paper does not establish an asymptotic theory of concentration, but it provides a precise framework in which such questions can be formulated.

\section{Problems and Outlook}

Several natural questions remain open.

\subsection{Bounded concentration}

\paragraph{Problem 8.1.}
Determine whether, for each invariant considered here, there exists a constant $r$ such that all maximizers lie in $C_n^{(r)}$ for all sufficiently large axial $n$.

\paragraph{Problem 8.2.}
Determine whether there exists a constant $s$ such that all maximizers lie in $\Spn_n^{(s)}$ for all sufficiently large axial $n$.

\subsection{The role of the thin spine}

\paragraph{Problem 8.3.}
Determine whether the thin spine already captures the correct asymptotic locus of extremal local complexity, or whether thicker neighborhoods $\Spn_n^{(r)}$ are required.

\paragraph{Problem 8.4.}
Identify graph-theoretic properties of $\Spn_n$ that would justify treating it as a genuine core, or at least as a plausible skeleton candidate, for $G_n$.

\subsection{The axial interaction graph}

\paragraph{Problem 8.5.}
Study the combinatorial structure of the axial interaction graph $\mathcal A_n$: connectedness, diameter, edge density, and dependence on $n$.

\paragraph{Problem 8.6.}
Relate the structure of $\mathcal A_n$ to the location of high-degree vertices and locally high-dimensional simplicial structure.

\subsection{Radial profiles}

\paragraph{Problem 8.7.}
Study the shell distributions
\[
s_{\ax}(n,k),\qquad s_{\mathrm{sp}}(n,k),
\]
and the cumulative masses
\[
|C_n^{(r)}|,\qquad |\Spn_n^{(r)}|.
\]

\paragraph{Problem 8.8.}
Determine whether these radial profiles exhibit stable normalized behavior as $n$ grows.

\subsection{Axisless graphs}

\paragraph{Problem 8.9.}
Construct a canonical replacement for the self-conjugate axis when $\Ax_n=\varnothing$, and determine whether an analogue of the spinal construction can be defined there.

\subsection{Axial levels and compact nodes}

A natural direction is to refine the self-conjugate axis by introducing an internal order and studying its decomposition into axial levels, compact nodes, and block thresholds. One may ask whether there is a useful level decomposition of $\Ax_n$ compatible with the present graph-theoretic spine and whether distinguished compact nodes control transitions between different spinal regimes.

\paragraph{Problem 8.10.}
Develop a compatible theory of axial levels, compact nodes, and block thresholds along the ordered self-conjugate axis, and determine how these finer axial structures interact with the thin spine and with concentration of local invariants.

\subsection{Broader context}

The present paper isolates one structural layer of the partition graph: the axial layer determined by conjugation and its first off-axis thickening. A natural continuation is to study how this layer interacts with other features of $G_n$, including local simplicial structure, anisotropy, and growth across $n$. For neighboring combinatorial viewpoints on partitions, order structures, and minimal-change generation, compare~\cite{Brylawski1973Lattice,GreeneKleitman1986LongestChains,LatapyPhan2009Lattice,Mutze2023UpdatedSurvey,RasmussenSavageWest1995GrayCodeFamilies,Savage1989GrayCodeSequences}.

\section*{Acknowledgements}
The author acknowledges the use of ChatGPT (OpenAI) for discussion, structural planning, and editorial assistance during the preparation of this manuscript. All mathematical statements, proofs, computations, and final wording were checked and approved by the author, who takes full responsibility for the contents of the paper.

\end{document}